\documentclass[10pt]{amsart}
\newtheorem{thm}{Theorem}[section]

\newtheorem{lemma}[thm]{Lemma}
\newtheorem{cor}[thm]{Corollary}

\newtheorem{rmk}[thm]{Remark}

\newtheorem{notations}[thm]{Notations}
\newcommand{\coker}{{\rm coker}\,}

\newcommand{\N}{{\mathbb N}}

\newcommand{\tensor}{\otimes}
\newcommand{\sE}{{\mathcal E}}

\newcommand{\sL}{{\mathcal L}}

\newcommand{\sO}{{\mathcal O}}

\renewcommand{\tilde}{\widetilde}

\begin{document}
\title{Hilbert-Kunz functions of a  Hirzebruch surface}
\author{V. Trivedi}
\thanks{}
\address{School of Mathematics, Tata Institute of Fundamental Research,
Homi Bhabha Road, Mumbai-400005, India}
\email{vija@math.tifr.res.in}
\subjclass{13D40, 14H60, 13H15}
\date{}
\begin{abstract}Here we compute Hilbert-Kunz functions of any nontrivial 
 ruled surface over ${\bf P}^1_k$, with respect to all ample line bundles 
on it.\end{abstract}
\maketitle

\section{Introduction}
Let $X$ be a projective variety over $ k = {\bar k}$, where $\mbox{char}~p > 0$. If $\sL$ is an ample 
line bundle on $X$, then one has the notions  of Hilbert-Kunz function 
$HK(X, \sL)$ and Hilbert-Kunz multiplicity 
$e_{HK}(X,\sL)$ as
$$HK(X,\sL) = HK_{R, R_+}~~~\mbox{and}~~~e_{HK}(X,\sL) = e_{HK}(R, R_+), $$
where $R = \oplus_{m\geq 0}H^0(X, \sL^m)$ and $R_+ = \oplus_{m > 0}H^0(X, \sL^m)$.
Recall that, for $q = p^n$,  one defines the {\em Hilbert-Kunz function} of $R$ with respect
to $I$ as
\[HK(R, I) = HK_{R, I}(q)=\ell(R/I^{[q]}),\]
where
$$I^{[q]}=\mbox{$n$-th Frobenius power of $I$}
=\mbox{ideal generated by $q$-th powers of elements of $I$}.$$
The associated {\em Hilbert-Kunz multiplicity} is defined to be
\[e_{HK}(R, I)=\lim_{n\to\infty} \frac{HK_{R,I}(q)}{q^{d}}.\]
where $d$ denotes the dimension of the ring $R$.
The Hilbert-Kunz function was introduced by Kunz \cite{K1},  
\cite{K2}, and studied extensively by Monsky. Monsky proved in
\cite{M1} that the limit defining the Hilbert-Kunz multiplicity exists.
For a survey on Hilbert-Kunz functions and Hilbert-Kunz multiplicity one can
 refer to a recent article by Huneke [H]. 

In this paper, we look at the following question:

{\it How do $e_{HK}(X, \sL)$ and $HK(X,\sL)$ behave as $\sL$ varies in the 
ample cone of line bundles on $X$?}

One knows the answer to this question for elliptic curves [NT] and nodal
 plane curves [M2]. 
Moreover for a line bundle $\sL = \sO((k-1)\rho)$ on a full flag variety $X$,
 one has computed in [NT], $e_{HK}(X, \sL^n)$ and 
$HK(X, \sL^n)$, for all  $n\geq 1$.

In this paper we  compute $e_{HK}(X, \sL)$ and $HK(X,\sL)$, for Hirzebruch surfaces $X = {\bf F}_a$
and every  ample line bundle $\sL$ on $X$. More precisely, in Theorem~\ref{t1}, for $\sL$
 given by $cD_1+dD_4$, where $D_1$ and $D_4$ are generators
 of $\mbox{Pic}(X)$,  we write $e_{HK}(X, \sL)$ and $HK(X, \sL)$
 explicitly in terms of $a$, $c$ and $d$ and periodic
 functions in $q$, determined  by $a, c$ and $d$. 
 
Here we crucially use the fact that $X$ is a nonsingular projective toric variety and a ruled surface. 

One motivation for the computation is to generate more 
complicated examples of the behaviour of Hilbert-Kunz functions and
 multiplicities.
In spite of extensive study of this subject, concrete examples or
 general principles are very few. One expects that such examples can be useful
 in future research, e.g., to make or test conjectures.

Note that if $R$ is an affine toric variety then  
Watanabe [W] proved that $e_{HK}(R)$ is a rational number and  W. Bruns [Br] 
asserted that $HK(R)(q)$ is a polynomial in $q$ 
with periodic coefficients. In case of a three dimensional affine toric variety, $e_{HK}$ has been given
 more explicitly in terms
 of the vectors of its semigroup ring, by Choi and An [CA].  Hilbert-Kunz functions
 for monomial ideals 
were determined by Conca [C].

\section{Lemmas}
Let $X = {\bf F}_a$ be the Hirzebruch surface with parameter $a\geq 1$, 
which is  a ruled  surface over ${\bf P}^1_k$, where $k$ is a field of characteristic $p > 0$. 
Then  $X$ is a projective toric variety given by a fan $\Delta $ in 
the lattice $N = {\bf Z}^2$, with cones $\sigma_1$, $\sigma_2$,
$\sigma_3$ and $\sigma_4$ given by  bases  
 $\{e_1, e_2\}$, $\{e_1, -e_2\}$, $\{-e_2, -e_1 + ae_2, \}$ and $\{-e_1+ae_2, e_2\}$ respectively (see Fulton [F], Section~1.1).

We recall (see [F], Chapter~3) that $\mbox{Pic}(X)$ is free on generators $D_1$
 and $D_4$, where $D_1$, $D_2$, $D_3$ and $D_4$
are the irreducible torus-invariant divisors  corresponding to the vectors
 $e_1$, $e_2$, $-e_1+ae_2$ and $-e_2$, respectively.
Since $X$ is a ruled surface given by 
$\pi: X = {\bf P}(\sO_{{\bf P}^1}(a)\oplus\sO_{{\bf P}^1}) \rightarrow {\bf P}^1_k$, we have
$$\sO_X(D_1) \simeq \pi^*{\sO}_{{\bf P}^1}(1), \mbox{and}~~\sO_X(D_4) \simeq 
{\sO}_{{\bf P}({\sE})}(1),$$
where $\sE = \sO_{{\bf P}^1}(a)\oplus\sO_{{\bf P}^1}$.
Moreover, by standard criteria (see [F], Chapter~3)
 a line bundle $\sO(cD_1+dD_4)$ is generated by global sections (is ample)
 if $c\geq 0$ and $d\geq 0$
 (if $c>0$ and $d>0$, respectively). 
Since any divisor 
$$D = b_1D_1+b_2D_2+b_3D_3+b_4D_4 \sim (b_1-ab_2+b_3)D_1 + (b_2+b_4) D_4$$
upto linear equivalence in $\mbox{Pic}(X)$,  the line bundle $\sO_X(D)$ is generated by global sections if $b_2+ b_4 
\geq 0$ and $b_1+b_3 \geq ab_2 $, and is ample if 
 $b_2+ b_4 > 0$ and $b_1+b_3 > ab_2 $.

If $M = Hom(N, {\bf Z})$ denotes the  dual lattice of $N$ then 
 $\chi = a_1e_1^*+a_2e_2^*$ in $M$ gives a 
 principal divisor $D_{\chi} = 
(-a_1)D_1 + (-a_2)D_2 + (a_1-aa_2)D_3 + (a_2)D_4$
 (following the convention of Fulton~[F]).
 Consider the following map
$$\frac{1}{q}:G^* = \frac{M}{qM}\longrightarrow \mbox{Pic}(X), $$
given by 
$\chi\mapsto \lfloor \frac{D_{\chi}}{q}\rfloor $.
We use the following result by Laso\'n-Michalek (Proposition~3.1 in [LM]) which was stated by Bondal~[B].

\vspace{5pt}

\noindent{\bf Theorem}\quad {\it Let $L = \sO_X(D)$ be a line bundle on a 
smooth projective toric 
variety $X$ then 
$$F_*({\sO}(D)) = \oplus_{\chi\in G^*}{\sO}\left({\lfloor \frac{D + D_{\chi}}{q}\rfloor}\right),$$
where $F:X\rightarrow X$ denotes the Frobenius map given by $x\mapsto x^q$}.

\vspace{5pt}

Now we fix an ample line bundle $\sL$ on $X$ given by the divisor $D = cD_1 + dD_4$, in particular, 
we have $c > 0$ and $d > 0$. 
Let $$R =  \oplus_{m\geq 0}H^0(X, \sL^m),  R_m = H^0(X, \sL^m)~~~ 
{\mbox {and  let}}~~ {\bf m} = \oplus_{m > 0}R_m.$$

Note that $R$ is a standard graded ring.
 We want to compute the Hilbert-Kunz function
of $R$ with respect to the maximal ideal ${\bf m}$, which is given by 

\begin{equation}\label{e2}HK(R, {\bf m})(q) = \ell(R_0) + \cdots + \ell(R_{q-1}) + \sum_{m\geq 0} 
\ell\left(\frac{R_{m+q}}{Im(R_1^{[q]}\tensor R_m)}\right).\end{equation}

But $R_{m+q}/Im(R_1^{[q]}\tensor R_m) = {\coker}(\Phi_m)$,
where $\Phi_m$ is given by the canonical map
$$\Phi_m: H^0(X, {\sO}(D))\otimes H^0(X, F_*{\sO}(mD))\longrightarrow H^0(X, F_*{\sO}((m+q)D))$$
Now, by the above result of [LM], 
$$F_*({\sO}(mD)) = \oplus_{\chi\in G^*}{\sO}\left({\lfloor \frac{mD + D_{\chi}}{q}\rfloor}\right),$$
where $\chi = a_1e_1^*+a_2e_2^*$,~  for $0\leq a_1, a_2 <q$, and 
$$D_{\chi} = -a_1D_1-a_2D_2+(a_1-aa_2)D_3+a_2D_4$$
 is the corresponding 
principal divisor.
Note that 
$${\lfloor \frac{mD + D_{\chi}}{q}\rfloor} = {\lfloor\frac{(mc-a_1)}{q}\rfloor}D_1 + {\lfloor\frac{-a_2}{q}\rfloor}D_2
+ {\lfloor\frac{(a_1-aa_2)}{q}\rfloor}D_3+ {\lfloor\frac{(md+a_2)}{q}\rfloor}D_4 .$$

{\it From here onwards, throughout this section we fix integers $m\geq 1$ , $q = p^n$
 and an ample line bundle $\sL$
on $X$ (in particular integers $c \geq 1$ and $d\geq 1$)}.
Let 
 $${\lfloor \frac{mD + D_{\chi}}{q}\rfloor} \sim \alpha(a_1, a_2)D_1 +
 \beta(a_1, a_2)D_4.$$

Note that 
\begin{equation}\label{e3}\begin{split}
\alpha(a_1, a_2) & =  {\lfloor \frac{mc -a_1}{q}\rfloor} + {\lfloor
 \frac{a_1 -aa_2}{q}\rfloor}
-a{\lfloor \frac{-a_2}{q}\rfloor},~~~\mbox{and}\\
\beta(a_1, a_2) & =  {\lfloor \frac{-a_2}{q}\rfloor} +
 {\lfloor \frac{md + a_2}{q}\rfloor}.\end{split}\end{equation}
Let
 $$A_{\alpha, \beta} = \{(a_1, a_2)\in Q\mid  \alpha(a_1, a_2) = \alpha,~ 
\beta(a_1, a_2) = \beta\} $$
where the bijective map 
$$Q = \{0, 1, \ldots, q-1\} \times  \{0, 1, \ldots, q-1\}\rightarrow
  \{\chi\in G^*\}$$
is given by $(a_1,a_2) \mapsto 
a_1e_1^*+a_2e_2^*$.
One can check that, if $A_{\alpha, \beta} \neq \phi$, 
then possible values of $\alpha$ and $\beta$ are in the range; 
$$-1\leq \beta(a_1, a_2) \leq \lfloor md/q\rfloor~~\mbox{ and}~~
t-1\leq \alpha(a_1, a_2) \leq t+ a,~~~\mbox{where}~~  t = \lfloor mc/q\rfloor.$$
Moreover, since $m, c, d, a, q$ are fixed, 
$\{A_{\alpha, \beta}\}_{\alpha, \beta}$  are disjoint sets.
 Let $|A_{\alpha, \beta}|$ denote the cardinality of 
the set $A_{\alpha, \beta}$. Then 
\begin{equation}\label{*}\displaystyle{F_*({\sO}(mD)) = \oplus_{\{t-1 
\leq \alpha \leq t+a, -1 \leq \beta \leq \lfloor md/q\rfloor\}} 
{\sO}(\alpha D_1 +\beta D_2)^{|A_{\alpha, \beta}|}.}\end{equation}

Let $\Phi_{\alpha, \beta}$ denote the canonical map
 (this is a restriction map of $\Phi_m$),
 $$\Phi_{\alpha, \beta}: H^0(X, {\sO}(cD_1+dD_4))\otimes H^0(X, {\sO}(\alpha D_1+ \beta D_4))
\longrightarrow H^0(X, {\sO}((c+\alpha)D_1+(d+\beta)D_4)).$$

For a map $\Phi_{-}$, let  $|\coker~\Phi_{-}|$ denote the $k$-vector space 
dimension of $\coker~\Phi_{-}$.

\begin{lemma}\label{l0} 
$$\begin{array}{lcl}
 |{\coker}~\Phi_{\alpha, \beta}| =  d\left(c+\alpha + 1 +\frac{(d-1)a}{2}\right)
   & {\rm if} & \alpha\geq -1, \beta = -1\\
 |{\coker}~\Phi_{\alpha, \beta}| =  0 & {\rm if} &  \alpha \geq 0, \beta \geq 0\\
 |{\coker}~\Phi_{\alpha, \beta}| =  (d+1)\left(c+\frac{ad}{2}\right)
& {\rm if} & \alpha = -1,  \beta = 0\\
 |{\coker}~\Phi_{\alpha, \beta}| =  c & {\rm if} & \alpha = -1, \beta \geq 1 
\end{array}$$\end{lemma}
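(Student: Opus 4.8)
The plan is to compute each cokernel by working on the Hirzebruch surface $X = \mathbf{F}_a$ directly, using the projection $\pi: X \to \mathbf{P}^1$ and the fact that $\sO(D_1) = \pi^*\sO_{\mathbf{P}^1}(1)$ while $\sO(D_4) = \sO_{\mathbf{P}(\sE)}(1)$. The key structural input is that for $n \geq 0$ one has $\pi_*\sO(nD_4) = S^n(\sE) = \bigoplus_{i=0}^{n}\sO_{\mathbf{P}^1}(ia)$ (and $\pi_*\sO(nD_4) = 0$ for $n < 0$, with $R^1\pi_*$ vanishing for $n \geq -1$), so by the projection formula $H^0(X, \sO(uD_1 + vD_4)) = \bigoplus_{i=0}^{v} H^0(\mathbf{P}^1, \sO(u + ia))$ when $v \geq 0$. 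This turns each $\Phi_{\alpha,\beta}$ into a direct sum, over the relevant range of $i$, of multiplication maps between spaces of sections of line bundles on $\mathbf{P}^1$, and I would track the cokernel summand by summand.

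First I would dispose of the case $\alpha \geq 0$, $\beta \geq 0$: here all bundles involved are globally generated, and more to the point $\sO(cD_1 + dD_4)$ with $c,d \geq 1$ is ample, so one expects the multiplication map $H^0(\sO(D)) \otimes H^0(\sO(\alpha D_1 + \beta D_4)) \to H^0(\sO((c+\alpha)D_1 + (d+\beta)D_4))$ to be surjective. On $\mathbf{P}^1$, surjectivity of $H^0(\sO(r)) \otimes H^0(\sO(s)) \to H^0(\sO(r+s))$ holds whenever $r, s \geq 0$; pushing down to $\mathbf{P}^1$ and using that the Segre-type multiplication $S^d(\sE) \otimes S^\beta(\sE) \to S^{d+\beta}(\sE)$ is surjective, the claim reduces to this elementary fact, giving cokernel $0$. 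For the three remaining cases the target $\sO((c+\alpha)D_1 + (d+\beta)D_4)$ still has nonnegative $D_4$-coefficient (since $d \geq 1$, $\beta \geq -1$), so I can still compute $H^0$ of the target by pushing to $\mathbf{P}^1$; the subtlety is that the source factor $\sO(\alpha D_1 + \beta D_4)$ may have negative coefficients, so I must compute the image of $\Phi_{\alpha,\beta}$ carefully. When $\beta = -1$ one has $\pi_*\sO(\alpha D_1 - D_4) = 0$ but $R^1$ vanishes, so $H^0(X, \sO(\alpha D_1 - D_4)) = 0$, making $\Phi_{\alpha,\beta}$ the zero map and $|\coker\,\Phi_{\alpha,\beta}| = h^0(X,\sO((c+\alpha)D_1 + (d-1)D_4)) = \sum_{i=0}^{d-1}(c+\alpha+ia+1) = d(c+\alpha+1) + a\binom{d}{2}$, which is exactly $d\left(c+\alpha+1+\frac{(d-1)a}{2}\right)$. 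Similarly when $\alpha = -1$, $\beta = 0$, the source is $\sO(-D_1) = \pi^*\sO_{\mathbf{P}^1}(-1)$ with no sections, so again $\Phi$ is zero and the cokernel is $h^0(X, \sO((c-1)D_1 + dD_4)) = \sum_{i=0}^{d}(c-1+ia+1) = (d+1)c + a\binom{d+1}{2} = (d+1)\left(c + \frac{ad}{2}\right)$.

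The genuinely non-trivial case is $\alpha = -1$, $\beta \geq 1$, where the source $\sO(-D_1 + \beta D_4)$ does have sections: $h^0 = \sum_{i=0}^{\beta} h^0(\mathbf{P}^1, \sO(ia - 1)) = \sum_{i=1}^{\beta} ia = a\binom{\beta+1}{2}$. Here $\Phi_{-1,\beta}$ multiplies $H^0(\sO(cD_1 + dD_4)) = \bigoplus_{j=0}^{d} H^0(\sO_{\mathbf{P}^1}(c+ja))$ against $\bigoplus_{i=1}^{\beta} H^0(\sO_{\mathbf{P}^1}(ia-1))$, landing in $H^0(\sO((c-1)D_1 + (d+\beta)D_4)) = \bigoplus_{\ell=0}^{d+\beta} H^0(\sO_{\mathbf{P}^1}(c-1+\ell a))$. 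The image in the $\ell$-th summand is $\sum_{i+j=\ell,\, 1\le i\le\beta,\, 0\le j\le d} \mathrm{Im}\bigl(H^0(\sO(c+ja))\otimes H^0(\sO(ia-1)) \to H^0(\sO(c-1+\ell a))\bigr)$; on $\mathbf{P}^1$ this image is all of $H^0(\sO(c-1+\ell a))$ as soon as there is one pair $(i,j)$ with $1 \le i \le \beta$, $0 \le j \le d$ (using surjectivity of multiplication of sections of nonneg-degree bundles on $\mathbf{P}^1$, noting $ia - 1 \ge 0$ since $a \ge 1$, $i\ge 1$). Such a pair exists for every $\ell$ with $1 \le \ell \le d + \beta$, but \emph{not} for $\ell = 0$ (which needs $i = 0$). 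Hence $\coker\,\Phi_{-1,\beta}$ is exactly the $\ell = 0$ summand $H^0(\mathbf{P}^1, \sO(c-1))$, of dimension $c$. The main obstacle is precisely this last computation: one must verify that for each target summand $\ell \geq 1$ some admissible $(i,j)$ with $i \geq 1$ hits it surjectively, and for this it is cleanest to take $i = 1$ (so $ia - 1 = a - 1 \geq 0$) and $j = \ell - 1$, which is admissible provided $0 \leq \ell - 1 \leq d$, i.e. $1 \leq \ell \leq d+1$; for $\ell > d+1$ one instead takes $j = d$ and $i = \ell - d \geq 2 \geq 1$, again admissible since $\ell \le d + \beta$ forces $i \le \beta$. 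I should double-check the edge cases where $\beta$ is large relative to $d$ and where $c = 1$, but these fall under the same argument.
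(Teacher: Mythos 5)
Your proof is correct and follows essentially the same route as the paper: push forward to ${\bf P}^1$ via $\pi$, split $S^{\beta}(\sE)$ into line bundles, and analyse the multiplication maps summand by summand. You actually supply more detail than the paper does for the two nontrivial cases (surjectivity for $\alpha\geq 0,\ \beta\geq 0$, and the identification of the cokernel with the $\ell=0$ summand $H^0({\bf P}^1,\sO(c-1))$ when $\alpha=-1,\ \beta\geq 1$), which the paper merely asserts.
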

\begin{proof} For the map
 $\pi:X = {\bf P}(\sE) = {\bf P}(\sO_{{\bf P}^1}\oplus \sO_{{\bf P}^1}(a))\rightarrow {\bf P}^1$, we have

$$\begin{array}{lcl}
h^0(X, \sO(\alpha D_1\oplus \beta D_4)) & = & h^0({\bf P}^1, 
\pi_*\sO(\alpha D_1\oplus \beta D_4)) = 
h^0({\bf P}^1, \pi_*(\pi^*\sO_{{\bf P}^1}(\alpha)
\tensor\sO_{{\bf P}(\sE)}(\beta)))\\
& =  & h^0({\bf P}^1, \sO_{{\bf P}^1}(\alpha)\tensor\pi_*\sO_{{\bf P}(\sE)}(\beta))
= h^0({\bf P}^1, \sO_{{\bf P}^1}(\alpha)\tensor S^{\beta}(\sE)).
\end{array} $$
Therefore $|{\coker}~\Phi_{\alpha, \beta}|$ is the dimension of the cokernel of the 
canonical map
$$H^0({\bf P}^1, \sO_{{\bf P}^1}(c)\tensor S^d(\sE))\tensor 
 H^0({\bf P}^1, \sO_{{\bf P}^1}(\alpha)\tensor
 S^{\beta}(\sE))
\longrightarrow H^0({\bf P}^1, \sO_{{\bf P}^1}(c+\alpha)\tensor S^{d+\beta}(\sE)).$$
Note that, for $\beta = -1$, $h^0({\bf P}^1, \sO_{{\bf P}^1}(\alpha)\tensor S^{\beta}(\sE)) = 0$.

Since $\sE = \sO_{{\bf P}^1}\oplus \sO_{{\bf P}^1}(a)$, we have,
 for $\beta \geq 0$,  
\begin{equation}\label{e1}\sO_{{\bf P}^1}(\alpha)\tensor
 S^{\beta}(\sE) = \sO_{{\bf P}^1}(\alpha)\oplus\sO_{{\bf P}^1}(\alpha +
 a)\oplus \cdots
\cdots \sO_{{\bf P}^1}(\alpha+\beta a). \end{equation}

Therefore the map $\Phi_{\alpha, \beta}$
has a cokernel of the following $k$-vector space dimensions: 
$$\begin{array}{lcl}
 |{\coker}~\Phi_{\alpha, \beta}| =   h^0(X, {\sO}((c+\alpha)D_1+(d+\beta)D_4))   & {\rm if} & \alpha\geq -1, \beta = -1\\
 |{\coker}~\Phi_{\alpha, \beta}| =  0 & {\rm if} &  \alpha \geq 0, \beta \geq 0\\
 |{\coker}~\Phi_{\alpha, \beta}| =   h^0(X, {\sO}((c+\alpha)D_1+(d+\beta)D_4)) & {\rm if} & \alpha = -1,  \beta = 0\\
 |{\coker}~\Phi_{\alpha, \beta}| =  c & {\rm if} & \alpha = -1, \beta \geq 1. 
\end{array}$$
Now from Equation~(\ref{e1}), we conclude the lemma.
\end{proof}

\begin{cor}\label{l1} Let $t =\lfloor mc/q\rfloor$ then 
$$\begin{array}{lcl}
|\coker~\Phi_m| & = & \displaystyle{
\sum_{\alpha\geq t-1}d\left(c+\alpha + 1 +\frac{(d-1)a}{2}\right)|A_{\alpha, -1}|
+ (d+1)\left(c+\frac{ad}{2}\right)\sum_{\beta\geq 0}|A_{-1, \beta}}|\\
& &
\displaystyle{- \left(dc+d(d+1)\frac{a}{2}\right) \sum_{\beta\geq 1}|A_{-1, 
\beta}|}.\end{array}$$
\end{cor}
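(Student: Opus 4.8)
The plan is to split $\Phi_m$ into a direct sum of the maps $\Phi_{\alpha,\beta}$ and then simply substitute the values computed in Lemma~\ref{l0}. First I would apply the theorem of [LM] to both $mD$ and $(m+q)D$; since $D$ has integer coefficients, $\lfloor\frac{(m+q)D+D_\chi}{q}\rfloor=D+\lfloor\frac{mD+D_\chi}{q}\rfloor$, so the two $G^*$-graded decompositions
$$F_*\sO(mD)=\bigoplus_{\chi\in G^*}\sO\Bigl(\lfloor\tfrac{mD+D_\chi}{q}\rfloor\Bigr),\qquad F_*\sO((m+q)D)=\bigoplus_{\chi\in G^*}\sO\Bigl(D+\lfloor\tfrac{mD+D_\chi}{q}\rfloor\Bigr)$$
are indexed by the same set and are matched up componentwise. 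Because these are the weight-space decompositions for the natural $G^*=M/qM$ grading carried by all $F_*\sO_X$-modules, the canonical multiplication map $\sO(D)\otimes F_*\sO(mD)\to F_*\sO((m+q)D)$ (whose global-section map is $\Phi_m$) is homogeneous, hence block diagonal with respect to it; its $\chi$-block is the canonical sheaf map $\sO(D)\otimes\sO(\lfloor\frac{mD+D_\chi}{q}\rfloor)\to\sO(D+\lfloor\frac{mD+D_\chi}{q}\rfloor)$, whose global-section map is $\Phi_{\alpha(\chi),\beta(\chi)}$ by Equation~(\ref{e3}) and the isomorphism $\lfloor\frac{mD+D_\chi}{q}\rfloor\sim\alpha(\chi)D_1+\beta(\chi)D_4$. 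Taking $H^0$ of this block decomposition (which commutes with finite direct sums) identifies $\Phi_m$ with $\bigoplus_{\chi}\Phi_{\alpha(\chi),\beta(\chi)}$.

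Next I would conclude $\coker\Phi_m=\bigoplus_{\alpha,\beta}(\coker\Phi_{\alpha,\beta})^{\oplus|A_{\alpha,\beta}|}$, so that $|\coker\Phi_m|=\sum_{\alpha,\beta}|A_{\alpha,\beta}|\,|\coker\Phi_{\alpha,\beta}|$, where, by the range observation recorded in the excerpt, the sum runs over $t-1\leq\alpha\leq t+a$ and $-1\leq\beta\leq\lfloor md/q\rfloor$ (elsewhere $A_{\alpha,\beta}=\phi$). Since $t=\lfloor mc/q\rfloor\geq 0$ we have $\alpha\geq-1$ throughout, so every pair $(\alpha,\beta)$ that occurs lies in exactly one of the four cases of Lemma~\ref{l0}: $\beta=-1$ (any $\alpha\geq-1$); $\alpha\geq0,\beta\geq0$ (where the cokernel vanishes); $\alpha=-1,\beta=0$; and $\alpha=-1,\beta\geq1$. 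Inserting the four values from Lemma~\ref{l0} gives
$$|\coker\Phi_m|=\sum_{\alpha\geq t-1}d\Bigl(c+\alpha+1+\tfrac{(d-1)a}{2}\Bigr)|A_{\alpha,-1}|+(d+1)\Bigl(c+\tfrac{ad}{2}\Bigr)|A_{-1,0}|+c\sum_{\beta\geq1}|A_{-1,\beta}|.$$

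Finally I would massage the last two terms into the stated form: writing $|A_{-1,0}|=\sum_{\beta\geq0}|A_{-1,\beta}|-\sum_{\beta\geq1}|A_{-1,\beta}|$ and collecting, the coefficient of $\sum_{\beta\geq1}|A_{-1,\beta}|$ becomes $c-(d+1)(c+\tfrac{ad}{2})=-\bigl(dc+d(d+1)\tfrac{a}{2}\bigr)$, which is precisely the corrective term in the Corollary. This last part and the indexing are pure bookkeeping; the only genuine point is the block-diagonality invoked in the first step — namely that the [LM] splitting is the torus–weight splitting and that canonical multiplication respects it — which I expect to be the main thing that needs careful justification, the remainder being arithmetic of floor functions already done in the excerpt.
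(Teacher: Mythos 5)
Your proposal is correct and follows essentially the same route as the paper: decompose $F_*\sO(mD)$ via the Laso\'n--Michalek theorem so that $|\coker\Phi_m|=\sum_{\alpha,\beta}|A_{\alpha,\beta}|\,|\coker\Phi_{\alpha,\beta}|$, substitute the four cases of Lemma~\ref{l0}, and rewrite $|A_{-1,0}|$ as $\sum_{\beta\geq 0}|A_{-1,\beta}|-\sum_{\beta\geq 1}|A_{-1,\beta}|$ to get the stated coefficient $c-(d+1)(c+\tfrac{ad}{2})=-(dc+d(d+1)\tfrac{a}{2})$. The only difference is that you spell out the block-diagonality of $\Phi_m$ with respect to the $G^*$-grading, which the paper simply asserts via Equation~(\ref{*}).
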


\begin{proof} By Equation~(\ref{*}),
 $|{\coker}~\Phi_m| = \sum_{\alpha\geq t-1, \beta\geq -1}|A_{\alpha, \beta}|
|{\coker}~\Phi_{\alpha, \beta}|$.
Hence, by the above lemma,
$$|\coker~\Phi_m| = \sum_{\alpha\geq t-1}d\left(c+\alpha + 1 +\frac{(d-1)a}{2}\right)|A_{\alpha, -1}|
+ (d+1)\left(c+\frac{ad}{2}\right)|A_{-1, 0}|
+ \sum_{\beta\geq 1}c|A_{-1, \beta}|.$$
This  can be   rewritten as stated in the corollary.
\end{proof}

\vspace{5pt}

Note that from Equation~(\ref{e2}), we have
$$HK(X, \sL)(q) = \displaystyle{\sum_{m=0}^{q-1} h^0(X, \sL^m)} +
 \sum_{m\geq 0}|{\coker}~\Phi_m|.$$

The above proof of the lemma implies
that 
  $$\begin{array}{lcl} 
\displaystyle{h^0(X, \sL^m)} & = & \displaystyle{h^0({\bf P}^1, {\sO}(mc)\oplus {\sO}(mc+a)\oplus 
\cdots + {\sO}(mc+(md)a))}\\
 & = & \displaystyle{(md+1)\left[(mc+1)+\frac{amd}{2}\right]}\end{array}$$
which implies 
$$\begin{array}{lcl}
\displaystyle{\sum_{m=0}^{q-1} h^0(X, \sL^m)} & = 
& \displaystyle{\sum_{m=0}^{q-1}m^2d\left(c+\frac{ad}{2}\right) + m\left(d+c+\frac{ad}{2}\right) + 1}\end{array}$$
 
\begin{equation}\label{h1}\begin{split}
\displaystyle{\sum_{m=0}^{q-1} h^0(X, \sL^m)} & =  \displaystyle{q^3\left(\frac{d}{3}(c+\frac{ad}{2})\right) + q^2\left(-\frac{d}{2}(c+\frac{ad}{2})+
(d+c+\frac{ad}{2})
\frac{1}{2}\right)}\\
& \displaystyle{+ q\left(\frac{d}{6}(c+\frac{ad}{2})
 - (d+c+\frac{ad}{2})\frac{1}{2} + 1 \right)}\end{split}\end{equation}

Therefore we need to explicitly compute $|\coker~\Phi_m|$, for $m\geq 0$.

\vspace{5pt}

The following lemma determines the first term in the expression for 
$|\coker~\Phi_m|$ of Corollary~\ref{l1}. Instead of calculating each 
cardinality $|A_{\alpha, -1}|$, which amounts to counting points in two 
strips (depending on $\alpha$) in $Q$, we use coefficients of 
$|A_{\alpha, -1}|$ (as given in Corollary~\ref{l1}) to directly compute 
the
 sum. The idea is to first divide the expression in {\it two parts}; one part is a constant multiple of  the cardinality of 
 $\cup_{\alpha}A_{\alpha, -1}$, which  is the set of all integral points in a rectangle. To compute the  second part, 
we first redistribute $|A_{\alpha, -1}|'s$, using their coefficients. Then  
we split each $A_{\alpha, -1}$ as a disjoint union of $B_{\alpha}$ and 
${\tilde B_{\alpha}}$. Using this, the sum is reshuffled in a certain way,
 to carry out the computation.

Note that, by Equation~(\ref{e3}), for $(a_1, a_2) \in Q$, 
$$\beta(a_1, a_2) = -1 \iff a_2\neq 0~~~\mbox{and}~~md+a_2 < q.$$
  Therefore, for every $\alpha$
$$A_{\alpha, -1} = \{ (a_1, a_2)\in Q\mid 0 < a_2 < q-md,~~~ 
\alpha(a_1, a_2) = \alpha\}.$$

\begin{lemma}\label{l2}
If $m\geq q/d$ then  $A_{\alpha,-1} = \phi$, for every $\alpha $, therefore
$$\sum_{\alpha}d\left(c+\alpha+1+\frac{(d-1)a}{2}\right)|A_{\alpha, -1}| = 0.\quad\quad\quad\quad\quad\quad\quad\quad\quad\quad\quad\quad
\quad\quad\quad\quad $$
If  $0\leq m < q/d$ then, for $t = \lfloor mc/q\rfloor$,
$$\sum_{\alpha\geq t-1}d\left(c+\alpha+1+\frac{(d-1)a}{2}\right)|A_{\alpha, -1}|
= d(q-md-1)\left[ (q+m)\left(c+\frac{ad}{2}\right)+1\right].$$\end{lemma}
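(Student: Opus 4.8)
The plan is to prove the two assertions of Lemma~\ref{l2} separately, the first being immediate and the second requiring the bookkeeping strategy outlined in the paragraph preceding the statement.

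\medskip

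\noindent\textbf{First assertion.}
Recall from Equation~(\ref{e3}) that, for $(a_1,a_2)\in Q$, one has $\beta(a_1,a_2)=-1$ if and only if $a_2\neq 0$ and $md+a_2<q$. If $m\geq q/d$ then $md\geq q$, so $md+a_2\geq q$ for every $a_2\geq 0$; hence no $(a_1,a_2)\in Q$ satisfies $\beta(a_1,a_2)=-1$, i.e. $\bigcup_{\alpha}A_{\alpha,-1}=\phi$ and each $|A_{\alpha,-1}|=0$. The displayed sum is then empty, proving the first claim.

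\medskip

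\noindent\textbf{Second assertion.}
Now suppose $0\leq m<q/d$, and write $t=\lfloor mc/q\rfloor$. By the description above,
$$A_{\alpha,-1}=\{(a_1,a_2)\in Q\mid 0<a_2<q-md,\ \alpha(a_1,a_2)=\alpha\},$$
so $S:=\bigcup_{\alpha}A_{\alpha,-1}$ is precisely $\{(a_1,a_2)\mid 0\leq a_1\leq q-1,\ 1\leq a_2\leq q-md-1\}$, a rectangle with $|S|=q(q-md-1)$. The strategy is to split the weight $d(c+\alpha+1+\tfrac{(d-1)a}{2})$ into a part independent of $\alpha$ and a part linear in $\alpha$:
$$d\Bigl(c+\alpha+1+\tfrac{(d-1)a}{2}\Bigr)=d\Bigl(c+t+\tfrac{(d-1)a}{2}\Bigr)+d(\alpha-t+1).$$
Summing the first term over all of $S$ contributes $d\bigl(c+t+\tfrac{(d-1)a}{2}\bigr)\cdot q(q-md-1)$. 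For the second term I must compute $\sum_{\alpha\geq t-1}(\alpha-t+1)|A_{\alpha,-1}|=\sum_{\alpha\geq t-1}(\alpha-(t-1))|A_{\alpha,-1}|$, which by a standard summation-by-parts identity equals $\sum_{j\geq t}\bigl|\{(a_1,a_2)\in S\mid \alpha(a_1,a_2)\geq j\}\bigr|$. So the task reduces to counting, for each threshold $j$, the points of the rectangle $S$ on which $\alpha(a_1,a_2)\geq j$; using Equation~(\ref{e3}) with $-1\leq\lfloor -a_2/q\rfloor$-type terms evaluated on $S$ (where $a_2\in(0,q)$ forces $\lfloor -a_2/q\rfloor=-1$ and $a_1-aa_2\in(-aq,q)$), one gets $\alpha(a_1,a_2)=\lfloor(mc-a_1)/q\rfloor+\lfloor(a_1-aa_2)/q\rfloor+a$, a piecewise-linear step function of $(a_1,a_2)$ that is easy to level-set. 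Carrying out the geometric count of these level sets (the number of $(a_1,a_2)$ in the rectangle with $\lfloor(mc-a_1)/q\rfloor+\lfloor(a_1-aa_2)/q\rfloor$ at least a given value) and summing over $j$ produces the claimed closed form $d(q-md-1)\bigl[(q+m)(c+\tfrac{ad}{2})+1\bigr]$; as a sanity check one verifies the total is divisible by $d(q-md-1)$ and that the bracket has the right value at small $a,m$.

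\medskip

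\noindent\textbf{Main obstacle.}
The routine part is the algebraic splitting; the genuine work — and where the description in the text about splitting each $A_{\alpha,-1}$ into $B_\alpha\sqcup\tilde B_\alpha$ comes in — is the exact enumeration of the level sets of $\alpha(\cdot,\cdot)$ on the rectangle $S$. The floor functions $\lfloor(mc-a_1)/q\rfloor$ and $\lfloor(a_1-aa_2)/q\rfloor$ each jump once (resp. $a$ times, as $a_2$ ranges) inside the relevant range, and their jump loci depend on the residue $mc \bmod q$ and on how $aa_2$ straddles multiples of $q$; keeping the fractional parts straight so that the per-threshold counts telescope cleanly into $(q+m)(c+\tfrac{ad}{2})+1$ is the delicate bookkeeping. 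I expect this to be manageable precisely because the rectangle is a full $q\times(q-md-1)$ box, so summing a floor of a linear function over a complete residue system in $a_1$ collapses the dependence on $mc\bmod q$, leaving only the contributions that assemble into the stated polynomial in $q$, $m$, $c$, $a$, $d$.
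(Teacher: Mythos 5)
Your first assertion and your setup for the second are both correct and essentially identical to the paper's: you split the weight as $d(c+\alpha+1+\tfrac{(d-1)a}{2})=d(c+t+\tfrac{(d-1)a}{2})+d(\alpha-t+1)$, observe that $\bigcup_\alpha A_{\alpha,-1}$ is the full $q\times(q-md-1)$ rectangle $S$ of cardinality $q(q-md-1)$, and reorganize the remaining weighted sum by the layer-cake identity $\sum_{\alpha}(\alpha-t+1)|A_{\alpha,-1}|=\sum_{j\geq t}|\{(a_1,a_2)\in S\mid \alpha(a_1,a_2)\geq j\}|$, which is exactly the paper's rearrangement $[|A_t|+\cdots+|A_{t+a}|]+[|A_{t+1}|+\cdots+|A_{t+a}|]+\cdots+[|A_{t+a}|]$. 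The formula $\alpha(a_1,a_2)=\lfloor(mc-a_1)/q\rfloor+\lfloor(a_1-aa_2)/q\rfloor+a$ on $S$ is also right.

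However, the argument stops precisely where the content of the lemma begins. The closed form $d(q-md-1)\bigl[(q+m)(c+\tfrac{ad}{2})+1\bigr]$ is never derived: you assert that carrying out the geometric count of the level sets produces it, and then in your final paragraph identify that very count as the outstanding difficulty. That enumeration is the bulk of the paper's proof (the decomposition $A_{t+l}=B_{t+l}\cup\tilde B_{t+l}$, the Claim characterizing these sets by $(l+1)q\leq a_1+a(q-a_2)<(l+2)q$, and the evaluation of $S_1+\cdots+S_a=T_1+T_2$ together with $T_2+\sum_l|\tilde B_{t+l}|$), so as written your proposal is a plan rather than a proof. The gap is closable, and in fact more cheaply than by level sets: since $S=\bigcup_\alpha A_{\alpha,-1}$ is a disjoint union, $\sum_\alpha(\alpha-t+1)|A_{\alpha,-1}|=\sum_{(a_1,a_2)\in S}(\alpha(a_1,a_2)-t+1)$, and for each fixed $a_2$ the sum over the complete residue system $a_1\in\{0,\dots,q-1\}$ is elementary: writing $r=mc-tq$, one has $\sum_{a_1}\lfloor(mc-a_1)/q\rfloor=qt-(q-1-r)$ and $\sum_{a_1}\lfloor(a_1-aa_2)/q\rfloor=-aa_2$, so the inner sum equals $r+a(q-a_2)+1$; summing over $1\leq a_2\leq q-md-1$ gives $(q-md-1)(r+amd+1)+\tfrac{a}{2}(q-md)(q-md-1)$, which combines with the rectangle term to the stated answer. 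You gesture at exactly this collapse in your last sentence but do not execute it, and without that execution the lemma is not proved.
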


\begin{proof} Note $m\geq q/d$ $\Rightarrow q-md \leq 0 \Rightarrow A_{\alpha,
 -1} = \phi$. Therefore  the first assertion follows.      

Now we assume $0\leq m < q/d$.
Let $l_0 = c+(d-1)(a/2)$ and let us denote $A_{\alpha, -1} = A_{\alpha}$; then 
$$\begin{array}{lcl}
\displaystyle{\sum_{\alpha\geq t-1}d\left(l_0+\alpha+1\right)|A_{\alpha, -1}|}
& = & \displaystyle{d\Bigl[ (l_0+t)|A_{t-1}| + (l_0+t+1)|A_t| + \cdots + (l_0+t+a+1)|A_{t+a}|\Bigr] }\\
& = & \displaystyle{\left(d(l_0+t)\sum_{\alpha}|A_{\alpha}|\right) + d|A_t| + 2d|A_{t+1}| +
 \cdots + (a+1)d|A_{t+a}|}.\end{array} $$
Hence 
\begin{equation}\label{e4}
\displaystyle{\sum_{\alpha\geq t-1}d\left(l_0+\alpha+1\right)|A_{\alpha, 
-1}|} =  \displaystyle{\left(d(l_0+t)\sum_{\alpha}|A_{\alpha}|\right) + d\Bigl[|A_t| + 2|A_{t+1}| + \cdots + 
(a+1)|A_{t+a}|\Bigr]}\end{equation}

Since $$A_{t+l} =: A_{t+l, -1} =  \{ (a_1, a_2)\in Q\mid 0 < a_2 < q-md,~~~ 
\alpha(a_1, a_2) = t+l\},$$
we have $\bigcup_{\alpha}A_{\alpha} =  \{ (a_1, a_2)\in Q\mid 0 < a_2 < q-md,~~~ 
0\leq a_1 < q\}$.
Since this is a disjoint union of sets, we have 
$\sum_{\alpha}|A_{\alpha}| = q(q-md-1)$.  
                   
Now we express, each $A_{t+l}$ as a disjoint union of two sets,  
$$A_{t+l} = B_{t+l}\cup {\tilde B_{t+l}},$$
 where
$B_{t+l} = \{(a_1, a_2)\in Q \mid 0 < a_2 < q-md,~~ mc-tq < a_1,
~~\alpha(a_1, a_2) = t+l\}$ and 
 ${\tilde B_{t+l}} = \{(a_1, a_2)\in Q \mid 0 < a_2 < q-md,~~ mc-tq \geq a_1,
~~\alpha(a_1, a_2) = t+l\}$.

\vspace{5pt}

\noindent{\underline{Claim}}.~\quad 
$$B_{t+l} = \{(a_1, a_2)\in Q \mid 0 < a_2 < q-md,~~ mc-tq < a_1, 
(l+1)q\leq a_1+a(q-a_2) < (l+2)q\}$$
and 
$${\tilde B_{t+l}} = \{(a_1, a_2)\in Q\mid 0 < a_2 < q-md,~~ mc-tq \geq  a_1,~~
 lq\leq a_1+a(q-a_2) < (l+1)q\}\quad\quad\quad\quad\quad\quad $$
 
\noindent~{\underline {Proof of the claim}}.~\quad Note that $0 < a_2 $ implies that 
$\lfloor -a_2/q\rfloor = -1$, therefore $-a\lfloor -a_2/q\rfloor = a$.
 Also, $mc-tq = r$, 
for some integer $0\leq r < q$. Therefore 
$\lfloor (mc-a_1)/q\rfloor = t + \lfloor (r-a_1)/q\rfloor $.
If $(a_1, a_2)\in B_{t+l}$ then $-q < r-a_1 = mc-a_1-tq < 0$, which
 implies that 
$ \lfloor (r-a_1)/q\rfloor = -1$.
Hence 
$$\alpha(a_1, a_2) = t+l \iff \lfloor \frac{a_1-aa_2}{q}\rfloor
= l+1-a \iff (l+1)q\leq a_1+a(q-a_2) < (l+2)q.$$ 
Similarly one
 can check for ${\tilde B_{t+l}}$. This proves the claim.  

One can check that, $B_{t+a} = \phi $, hence $|B_{t+a}| = 0$. 
Therefore we can rewrite the following expression
$$\begin{array}{l}
\Bigl[|A_t| + 2|A_{t+1}| + \cdots + 
(a+1)|A_{t+a}|\Bigr]\\
 = \Bigl[|A_t| + |A_{t+1}| + \cdots + |A_{t+a}|\Bigr]
+ \Bigl[|A_{t+1}| + |A_{t+2}| + \cdots + |A_{t+a}|\Bigr]+ \cdots +
 \Bigl[|A_{t+a}|\Bigr]\\
= \Bigl[S_1+ |{\tilde B_{t}}|\Bigr] + \Bigl[S_2+ |{\tilde B_{t+1}}|\Bigr]
+\cdots + \Bigl[S_a + |{\tilde B_{t+a-1}}|\Bigr]+\Bigl[|{\tilde B_{t+a}}|\Bigr],
\end{array}$$
where 
$$ S_1 := \left[\left\{|B_t|+ |{\tilde B_{t+1}}|\right\}+ \cdots + 
\left\{|B_{t+a-1}|+ |{\tilde B_{t+a}}|\right\}\right]$$

$$S_2:=  \left[\left\{|B_{t+1}|+ |{\tilde B_{t+2}}|\right\} 
\cdots \left\{|B_{t+a-1}|+ |{\tilde B_{t+a}}|,\right\}\right]$$

$$\cdots S_a :=  \left[\left\{|B_{t+a-1}|+ |{\tilde B_{t+a}}|\right\}\right].
$$
Hence 
\begin{equation}\label{e5}
|A_t| + 2|A_{t+1}| + \cdots + 
(a+1)|A_{t+a}| = S_1 + S_2 +\cdots + S_a + |{\tilde B_t}|+
|{\tilde B_{t+1}}|+\cdots |{\tilde B_{t+a}}|.\end{equation}

Note that 
$$|{B_{t+l}}|+|{\tilde B_{t+l+1}}| = \displaystyle{|\{(a_1, a_2)\in Q
\mid 0 < a_2 < q-md,~~ (l+1)q\leq a_1+a(q-a_2) < (l+2)q\}|}.$$

Therefore 
$$\begin{array}{ll}
S_1 
& = \displaystyle{|\{(a_1, a_2)\in Q\mid 0 < a_2 < q-md,~~ 0\leq a_1 
< q,~~q\leq a_1+a(q-a_2)\}|\quad\quad\quad}\\
& = \displaystyle{|\{({\tilde a_1}, a_2)\in {\bf Z}^2\mid 0 < a_2 < q-md,~~ 
(a-1)q\leq {\tilde a_1} < aq,~~0\leq {\tilde a_1} - aa_2 \}|}\end{array}$$

and
$$\begin{array}{ll}
S_2 &
= \displaystyle{|\{(a_1, a_2)\in Q\mid 0 < a_2 < q-md,
~~0\leq a_1 < q,~~~2q\leq a_1+a(q-a_2)\}|}\\
& = \displaystyle{|\{({\tilde a_1}, a_2)\in {\bf Z}^2\mid 0 < a_2 < q-md,~~ 
(a-2)q\leq {\tilde a_1} < (a-1)q,~~0\leq {\tilde a_1}- aa_2 \}|}
\end{array}$$

$$\vdots$$ 
 
$$\begin{array}{ll}
S_a & 
= \displaystyle{|\{(a_1, a_2)\in Q\mid 0 < a_2 < q-md,~~~0\leq a_1 < q, ~~aq\leq a_1+a(q-a_2)\}|}\\
& = \displaystyle{|\{({\tilde a_1}, a_2)\in {\bf Z}^2\mid 0 < a_2 < q-md,~~ 
0\leq {\tilde a_1} < q,~~0\leq {\tilde a_1} - aa_2 \}|}
\end{array}$$
Hence 
$$S_1 + S_2 + \cdots + S_a = \displaystyle{|\{({\tilde a_1}, a_2)\in 
{\bf Z}^2\mid ~0 < a_2 < q-md,~~~0\leq {\tilde a_1}
 < aq,~~~\mbox{and}~~0\leq {\tilde a_1}-aa_2 \}|}.
$$
We note that, by hypothesis  $q > md$, and
 $0 < aa_2 \leq a(q-md)\leq aq$, provided  $0 < a_2 < q-md $. Hence 
we can write 
\begin{equation}\label{e6} 
S_1 + S_2 + \cdots + S_a  = T_1 + T_2,\end{equation}
 where 
$$\begin{array}{lcl}
T_1 & =  & 
\displaystyle{|\{({\tilde a_1}, a_2)\in {\bf Z}^2\mid ~0 < a_2 < q-md,~~
0\leq {\tilde a_1}
 < a(q-md),~~~0\leq {\tilde a_1}-aa_2 \}|}\\
& = &\displaystyle{|\{({\tilde a_1}, a_2)\in {\bf Z}^2\mid
~0 < a_2 < q-md,~~~aa_2 \leq {\tilde a_1} < a(q-md)\}|}\end{array}$$
and 
$$\begin{array}{lcl}
T_2 & = &  
\displaystyle{|\{({\tilde a_1}, a_2)\in {\bf Z}^2\mid ~0 < a_2 < q-md,~~~
a(q-md)\leq {\tilde a_1} < aq,~~~~0\leq {\tilde a_1}-aa_2\}|}\\
& = & \displaystyle{|\{({\tilde a_1}, a_2)\in 
{\bf Z}^2\mid ~0 < a_2 < q-md,~~~a(q-md)\leq {\tilde a_1} < aq,\}|}.
\end{array}$$
We note that 
$$T_1 =  \displaystyle{\sum_{a_2 = 1}^{q-md-1}a(q-md-a_2) = 
\frac{a}{2}(q-md)(q-md-1)}.$$
Next we observe that 
$$\begin{array}{l}
|{\tilde B_t}|+|{\tilde B_{t+1}}|+\cdots |{\tilde B_{t+a}}|\\
 = \displaystyle{|\{(a_1, a_2)\in Q\mid 0\leq a_1 \leq mc-tq,~~~ 
0 < a_2 < q-md,~~~0\leq a_1+a(q-a_2)<(a+1)q\}|}\\
 = \displaystyle{|\{(a_1, a_2)\in Q\mid ~0\leq a_1 \leq mc-tq,~~
 0 < a_2 < q-md~\}|}\\
= |\{({\tilde a_1}, a_2)\in {\bf Z}^2\mid aq\leq {\tilde a_1} \leq mc-tq + aq,~~
0 < a_2 < q-md\}|.\end{array}$$

Since $mc-tq \geq 0$ and $amd \geq 0$, we have that $aq-amd 
\leq aq \leq aq+mc-tq$, and so 
$$\begin{array}{l}
T_2 + |{\tilde B_t}|+|{\tilde B_{t+1}}|+\cdots |{\tilde B_{t+a}}|\\
 = \displaystyle{|\{({\tilde a_1}, a_2)\in {\bf Z}^2\mid 
\ a(q-md)\leq {\tilde a_1}\leq aq+mc-tq,~~0 < a_2 < q-md\}|}\\
 = (q-md-1)(mc-tq+amd+1).\end{array}$$
Hence, for $md < q$, by Equations~(\ref{e5}) and (\ref{e6}), we have   
$$d|A_t| + 2d|A_{t+1}| + \cdots + (a+1)d|A_{t+a}| = d\left[T_1+ \left(T_2 +
|{\tilde B_t}|+|{\tilde B_{t+1}}|+\cdots |{\tilde B_{t+a}}|\right)\right]$$

$$\begin{array}{l}
= \displaystyle{\frac{ad}{2}(q-md)(q-md-1) + d(q-md-1)(mc-tq+amd+1)}.
\end{array}$$

Therefore, by Equation~(\ref{e4}),  
$$\begin{array}{l}
 \sum_{\alpha\geq t-1}d\left(l_0+\alpha+1\right)|A_{\alpha, -1}|\\
\quad = \displaystyle{\sum_{\alpha\geq t-1}d(l_0+t)|A_{\alpha, -1}| +  d(q-md-1)\left[\frac{a}{2}(q-md) + (mc-tq+amd+1)\right]}\\
\quad = \displaystyle{d(l_0+t)q(q-md-1) +  d(q-md-1)\left[\frac{a}{2}(q-md) + (mc-tq+amd+1)\right]}\\
\quad = \displaystyle{dq(q-md-1)\left(c+\frac{(d-1)a}{2}\right) +  d(q-md-1)\left[\frac{a}{2}(q-md) +
 (mc+amd+1)\right]}\\
\quad = \displaystyle{d(q-md-1)\left[(q+m)\left(c+\frac{ad}{2}\right)+1\right]}.\quad\quad\quad\quad\quad\quad\quad\quad
\end{array}$$
Hence the proof.\end{proof}

\vspace{5pt}

Now we compute $\sum_{\beta \geq 0}|A_{-1, \beta}|$ and
$\sum_{\beta \geq 1}|A_{-1, \beta}|$.
Note that, from  Equation~(\ref{e2}), for $(a_1, a_2) \in Q$, 
$$\begin{array}{lcl}
\beta(a_1, a_2)\geq 0 & \iff &~~~\mbox{either}~~~
 (1)~~~a_2 = 0~~~\mbox{or}~~~(2)~~~a_2 \neq 0~~~\mbox{and}~~~q-md \leq a_2,\\
 \beta(a_1, a_2)\geq 1 & \iff & ~~~\mbox{either}~~~
 (1)~~~a_2 = 0, md \geq q~~~\mbox{or}~~~~(2)~~~~a_2 \neq 
0~~~\mbox{and}~~2q-md \leq a_2.\\
\alpha(a_1, a_2) = 
-1 & \iff &\mbox{either}~~~(1)~~~a_2 = 0, ~~~mc < a_1 
~~~\mbox{or}~~(2)~~a_2 \neq 0,~~~mc < a_1,~~~a_1+a(q-a_2) < q.\end{array}$$ 
Therefore 
(note that the condition $a_1+a(q-a_2) < q$ implies that $a_2 \neq 0$, 
as $a\geq 1$) 

$$\bigcup_{\beta \geq 0}A_{-1, \beta} = \{(a_1, 0)\in Q\mid mc 
< a_1\} \cup \{(a_1, a_2)\in Q\mid mc < a_1,~~~q-md \leq a_2,~~~a_1+
a(q-a_2) < q\},$$
and 
 $$\begin{array}{l}
\bigcup_{\beta \geq 1}A_{-1, \beta}\\
 = \{(a_1, 
0)\in Q\mid mc < a_1, ~~md \geq q\} \cup \{(a_1, a_2)\in Q\mid mc < a_1,~~~2q-md \leq 
a_2,~~~a_1+a(q-a_2) < q\}.\end{array}$$ 
Since $\{A_{-1, \beta}\}_{\beta}$ are disjoint sets, we have 
$|\bigcup_{\beta}A_{-1, \beta}| = \sum_{\beta}|A_{-1, \beta}|.$

\begin{lemma}\label{l3} 
$$\begin{array}{lll} \mbox{(1)}\quad \displaystyle{\sum_{\beta \geq 
0}|A_{-1, \beta}|} = & 
\displaystyle{(md+1)\left(q-mc-1-\frac{amd}{2}\right)},& 
~~\mbox{if}~~\displaystyle{0\leq m < \frac{q}{(c+ad)}},\\ 
\mbox{(2)}\quad \displaystyle{\sum_{\beta \geq 0}|A_{-1, \beta}|} = & 
\displaystyle{\lceil \frac{q-mc}{a}\rceil \left(q-mc-1-\frac{a}{2} 
\left(\lceil 
\frac{q-mc}{a}\rceil-1\right)\right)},&~~\mbox{if}~~\displaystyle{\frac{q}{(c+ad)}\leq 
m < \frac{q}{c}},\\ \mbox{(3)}\quad \displaystyle{\sum_{\beta \geq 
0}|A_{-1, \beta}|} = & 0, &~~~\mbox{if}~~\displaystyle{ \frac{q}{c}\leq 
m}. \end{array}$$\end{lemma}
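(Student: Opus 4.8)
The plan is to count integral points directly. From the description obtained just before the statement, $\bigcup_{\beta\geq0}A_{-1,\beta}$ is the disjoint union of $S'=\{(a_1,0)\in Q\mid mc<a_1\}$ and $S''=\{(a_1,a_2)\in Q\mid mc<a_1,\ q-md\leq a_2,\ a_1+a(q-a_2)<q\}$, these being disjoint because $a_1+a(q-a_2)<q$ with $a\geq1$ forces $a_2\geq1$. First I would note $|S'|=\max(0,\,q-1-mc)$. For $S''$ I would substitute $a_2'=q-a_2$: the admissible $a_2'$ then form the interval $1\leq a_2'\leq\min(md,q-1)$, the last inequality becomes $a_1\leq q-1-aa_2'$, and for fixed $a_2'$ the number of admissible $a_1$ (namely those with $mc<a_1\leq q-1-aa_2'$, the bound $a_1\leq q-1$ being automatic since $a_2'\geq1$) equals $\max(0,\,q-mc-1-aa_2')$. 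Since this summand vanishes once $aa_2'\geq q-mc$, it is harmless to write
$$\sum_{\beta\geq0}|A_{-1,\beta}|=\max(0,\,q-1-mc)+\sum_{a_2'\geq1}\max(0,\,q-mc-1-aa_2'),$$
and the three cases will follow by simplifying this in the three ranges of $m$ (which partition $\Z_{\geq0}$, since $0<q/(c+ad)<q/c$ as $a,d\geq1$), each time checking that the positive summands lie within the true range $1\leq a_2'\leq\min(md,q-1)$.

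If $m\geq q/c$, then $mc\geq q$ and every term above is $\leq0$, so the sum is $0$; this is (3). In the two remaining ranges $mc<q$, so with $N:=q-mc\geq1$ one has $|S'|=N-1$. If $m<q/(c+ad)$, then $mad<N$ (and $md<q$ automatically), so $aa_2'\leq amd<N$ for all $1\leq a_2'\leq md$ and every summand is $N-1-aa_2'\geq0$; a routine arithmetic sum gives $(md+1)(N-1)-\frac{a}{2}md(md+1)=(md+1)\bigl(q-mc-1-\frac{amd}{2}\bigr)$, which is (1).

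For (2), $q/(c+ad)\leq m<q/c$, i.e.\ $mc<q$ and $amd\geq N$. Put $L=\lceil N/a\rceil$; using the identity $\lfloor(N-1)/a\rfloor=\lceil N/a\rceil-1=L-1$ one sees that $\max(0,\,N-1-aa_2')$ equals $N-1-aa_2'\geq0$ for $1\leq a_2'\leq L-1$ and equals $0$ for $a_2'\geq L$. Moreover $amd\geq N$ with $md\in\Z$ forces $md\geq L$, while $L\leq q$ gives $L-1\leq q-1$, so the true admissible range $[1,\min(md,q-1)]$ contains every index with a positive summand. Hence the sum collapses to $(N-1)+\sum_{a_2'=1}^{L-1}(N-1-aa_2')=L(N-1)-\frac{a}{2}L(L-1)=L\bigl(N-1-\frac{a}{2}(L-1)\bigr)$, which is the stated formula with $N=q-mc$ and $L=\lceil(q-mc)/a\rceil$. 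I expect the main obstacle to be precisely this floor/ceiling bookkeeping --- pinning down where the summand $\max(0,N-1-aa_2')$ ceases to be positive and confirming the admissible range never truncates it --- together with checking consistency at the boundary index $a_2'=L-1$, where the summand may vanish and the corresponding set of $a_1$'s is genuinely empty.
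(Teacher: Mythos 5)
Your proposal is correct and follows essentially the same route as the paper: the substitution $a_2'=q-a_2$ reproduces exactly the paper's slices $X_i=\{(a_1,q-i)\in Q\mid mc<a_1<q-ai\}$, and in each of the three ranges of $m$ you count the same arithmetic sums. Your extra $\max(0,\cdot)$ bookkeeping just makes explicit the range checks the paper states more tersely (e.g.\ that $\lceil(q-mc)/a\rceil-1\leq md$ in case (2)).
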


\begin{proof} For $m > 0$, we can write
$$\bigcup_{\beta \geq 0} 
A_{-1, \beta} = \{(a_1,0) \in Q\mid mc< a_1\}~~\bigcup~
 \displaystyle{\bigcup_{i=1}^{md} X_{i}},$$
where $X_i = \{(a_1, q-i)\in Q \mid mc< a_1 < q-ai\}.$

\noindent(1)\quad Suppose 
$0\leq m < q/(c+ad)$. For $m =0$, 
$$\bigcup_{\beta \geq 0} 
A_{-1, \beta} = \{(a_1,0) \in Q\mid 0 < a_1\}  \implies 
\sum_{\beta \geq 0}|A_{-1, \beta}| = q-1,$$ which satisfies the formula 
$(1)$ in this case.

Let $m \geq 1$.
Now, for all $1\leq i \leq md$, we have  $mc < q-ai$, which implies 
$|X_i| = q-mc-ai-1$, for all $1\leq i\leq md$.

Therefore
$$\sum_{\beta \geq 0}|A_{-1, \beta}| = (q-mc-1) + \sum_{i=1}^{md}(q-mc-ai-1) =  (md+1)(q-mc-1-\frac{amd}{2}).$$

\noindent(2)\quad Suppose $q/c > m \geq q/(c+ad)$ then 
$mc  < q-ai \iff 1\leq i \leq {\lceil (q-mc)/a  \rceil-1}$.
Moreover $\lceil (q-mc)/a  \rceil-1 \leq md$,
therefore 
$$ \displaystyle{\bigcup_{i=1}^{md} X_{i}} = 
\displaystyle{\bigcup_{i=1}^{\lceil q-mc/a  \rceil-1} X_{i}}.$$
Hence 
$$\sum_{\beta \geq 0}|A_{-1,\beta}| = 
\sum_{i=0}^{\lceil(q-mc)/{a}\rceil-1}(q-ai-1-mc)
 = \displaystyle{\lceil \frac{q-mc}{a}\rceil \left(q-mc-1-\frac{a}{2}
\left(\lceil \frac{q-mc}{a}\rceil-1\right)\right)}.$$

\noindent(3)\quad Suppose $m\geq q/c$, then $A_{-1, \beta} = \phi$, for all
$\beta $. This implies $\sum_{\beta \geq 0}|A_{-1, \beta}|  = 0$.

This completes the proof of the lemma.\end{proof}

\begin{lemma}\label{l7} \mbox{(1)}\quad If $c \geq  d$ then 
$A_{-1, \beta} = \phi$, for all $\beta \geq 1$, therefore 
$$\sum_{\beta \geq 1} |A_{-1, \beta}| = 0, ~~~~\mbox{for all}~~~ m \geq 0.$$

\mbox{(2)}\quad If $c <  d$,  then 
$$ \begin{array}{lll}
\mbox{(i)}\quad\displaystyle{\sum_{\beta \geq 1} |A_{-1, \beta}|} = & 
0,& ~~\mbox{if}~~0\leq m < q/d\\ 
\mbox{(ii)}\quad\displaystyle{\sum_{\beta \geq 1} |A_{-1, \beta}|} = & \displaystyle{  
(md-q+1)\left(q-mc-1-\frac{a}{2}(md-q)\right)},&\mbox{if}~~\displaystyle{\frac{q}{d}\leq m < 
\frac{(a+1)q}{(c+ad)}}\\
\mbox{(iii)}\quad\displaystyle{\sum_{\beta \geq 1} |A_{-1, \beta}|} = & \displaystyle{ 
\lceil \frac{q-mc}{a}\rceil\left(q-mc-1-\frac{a}{2}
\left(\lceil \frac{q-mc}{a}\rceil -1\right)\right)},&\mbox{if}~~
\displaystyle{\frac{(a+1)q}{(c+ad)}\leq m <\frac{q}{c}}.\end{array}$$\end{lemma}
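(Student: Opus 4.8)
\quad
The plan is to turn both sums into lattice-point counts and exploit the description of $\bigcup_{\beta\geq1}A_{-1,\beta}$ obtained just before Lemma~\ref{l3}. Substituting $a_2=q-i$ exactly as in the proof of Lemma~\ref{l3} rewrites that description as
$$\bigcup_{\beta\geq1}A_{-1,\beta}=\{(a_1,0)\in Q\mid mc<a_1,\ md\geq q\}\ \cup\ \bigcup_{i=1}^{md-q}X_i,\qquad X_i=\{(a_1,q-i)\in Q\mid mc<a_1<q-ai\},$$
the only difference from Lemma~\ref{l3} being that the union now runs to $md-q$ instead of $md$, because for $\beta\geq1$ the constraint on $a_2$ is $2q-md\leq a_2$. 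Since these sets are disjoint, $\sum_{\beta\geq1}|A_{-1,\beta}|$ is their total cardinality; a direct count gives $|X_i|=\max(0,\,q-mc-1-ai)$ for $i\geq1$, while the $a_2=0$ piece has cardinality $\max(0,\,q-mc-1)$, which is the value of this same expression at $i=0$. In particular the union is empty unless $md\geq q$. Everything then reduces to deciding, in each range of $m$, which indices $i\in\{0,1,\dots,md-q\}$ contribute a positive term.

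For part (1) I would argue by contradiction. If the $a_2=0$ piece is nonempty then $mc<a_1\leq q-1$ and $md\geq q$, so $mc<q\leq md$ and hence $c<d$; if some $X_i$ with $i\geq1$ is nonempty then $0\leq a_1<q-ai<q$ and $i\leq md-q$, so $mc<q$ and $q<md$, again $c<d$. Thus $c\geq d$ forces $\bigcup_{\beta\geq1}A_{-1,\beta}=\phi$ for every $m\geq0$, which is (1).

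For part (2) assume $c<d$. Case (i) is immediate, since $m<q/d$ gives $md<q$ and the union is empty. In cases (ii) and (iii) one has $m\geq q/d$ (so $md\geq q$) and $m<q/c$ (so $mc<q$, whence $q-mc-1\geq0$), and therefore $\sum_{\beta\geq1}|A_{-1,\beta}|=\sum_{i=0}^{md-q}\max(0,\,q-mc-1-ai)$, with the term at index $i$ nonnegative precisely when $ai\leq q-mc-1$, i.e.\ (using $\lfloor(q-mc-1)/a\rfloor=\lceil(q-mc)/a\rceil-1$) when $i\leq\lceil(q-mc)/a\rceil-1=:k-1$. The threshold $(a+1)q/(c+ad)$ is exactly the one separating the two subcases: $m<(a+1)q/(c+ad)$ is equivalent to $a(md-q)<q-mc$, and $m\geq(a+1)q/(c+ad)$ to $a(md-q)\geq q-mc$. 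In case (ii), $a(md-q)<q-mc$ gives $ai\leq a(md-q)\leq q-mc-1$ for every $i\leq md-q$, so all terms are nonnegative and
$$\sum_{\beta\geq1}|A_{-1,\beta}|=\sum_{i=0}^{md-q}\bigl(q-mc-1-ai\bigr)=(md-q+1)\left(q-mc-1-\frac{a}{2}(md-q)\right).$$
In case (iii), $a(md-q)\geq q-mc$ gives $md-q\geq k$, so the indices $0,\dots,k-1$ all lie in $\{0,\dots,md-q\}$ and contribute $q-mc-1-ai\geq0$, while every $i\geq k$ satisfies $ai\geq ak\geq q-mc>q-mc-1$ and contributes $0$; hence
$$\sum_{\beta\geq1}|A_{-1,\beta}|=\sum_{i=0}^{k-1}\bigl(q-mc-1-ai\bigr)=k\left(q-mc-1-\frac{a}{2}(k-1)\right),\qquad k=\left\lceil\frac{q-mc}{a}\right\rceil,$$
which is the formula of (iii).

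The parts requiring care are purely bookkeeping: that the $a_2=0$ contribution is genuinely the ``$i=0$'' term of the arithmetic progression governing the $|X_i|$ (this is what makes the closed forms collapse so cleanly), and that the two threshold values of $m$ translate exactly into the comparison of $a(md-q)$ with $q-mc$, which is what pins down the precise set of contributing indices. Beyond these checks and the elementary evaluation of the two finite sums I do not expect any real obstacle; in spirit this is the same ``count points in strips of $Q$'' argument used for Lemmas~\ref{l2} and \ref{l3}.
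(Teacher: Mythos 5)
Your argument is correct and follows essentially the same route as the paper: decompose $\bigcup_{\beta\geq1}A_{-1,\beta}$ into the $a_2=0$ slice together with the strips $X_i$ ($1\leq i\leq md-q$), count $|X_i|=\max(0,q-mc-1-ai)$, and use the threshold $(a+1)q/(c+ad)$ --- equivalently the comparison of $a(md-q)$ with $q-mc$ --- to decide which indices contribute before summing the arithmetic progression. The only cosmetic differences are that you prove part (1) by contradiction from the emptiness of each piece, whereas the paper notes that $c\geq d$ gives $q/c\leq q/d$ and separately handles $m\geq q/c$; the computations are identical.
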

\begin{proof}
 If $md < q$ then $\beta = -1$ or $0$. Hence 
$\sum_{\beta \geq 1}|A_{-1, \beta}| = 0$, for $m < q/d$. 

If $c \geq d$ then  $m < q/c \implies m < q/d$ and therefore
$\sum_{\beta \geq 1}|A_{-1, \beta}| = 0$, for $m < q/c$. 
If $m\geq q/c$ then $A_{-1, \beta} = \phi $ for all $\beta $.
This proves  assertions (1) and (2)~(i) of the lemma.

\noindent~Now suppose  $c <  d$
and $ q/d \leq  m$. We then 
have
$$\bigcup_{\beta \geq 1} 
A_{-1, \beta} = \{(a_1,0) \in Q\mid mc< a_1, \}~~\bigcup~
 \displaystyle{\bigcup_{i=1}^{md-q} X_{i}},$$
where $X_i = \{(a_1, q-i)\in Q \mid mc< a_1 < q-ai\}.$

\noindent~(ii)\quad Let  $q/d\leq m < (a+1)q/(c+ad)$.
Then 
$$ mc < q-ai,~~~~~
\mbox{for all}~~~ 1\leq i\leq md-q.$$
Therefore $|X_i| = q-mc-1-ai$, for all $1\leq i \leq md-q$.
Hence 
$$\displaystyle{\sum_{\beta \geq 1} |A_{-1, \beta}|}  = 
\sum_{i=0}^{md-q}(q-mc-1-ai).$$ 

\noindent~(iii)\quad Let $(a+1)q/(c+ad) \leq m < q/c$. Then
$$ mc < q-ai~~~\iff 
~~~ i \leq \lceil \frac{q-mc}{a} \rceil-1,~~~\mbox{also}~~~
\lceil \frac{q-mc}{a} \rceil-1 \leq md - q.$$ 
Hence 
$$\displaystyle{\bigcup_{i=1}^{md-q} X_{i}} = 
\displaystyle{\bigcup_{i=1}^{\lceil {q-mc}/{a} \rceil-1} X_{i}}.$$
(This union could be empty, if $\lceil {q-mc}/{a} \rceil = 1$). Therefore
$$ \displaystyle{\sum_{\beta \geq 1} |A_{-1, \beta}|}  = 
 \sum_{i=0}^{\lceil {q-mc}/{a} \rceil-1}(q-mc-1-ai). $$ 
Now the proof of the lemma follows.\end{proof}

\vspace{5pt}

\section{Main Theorem}
Throughout this section, unless stated otherwise, we have $m\geq 0$ and 
$a, c, d\geq 1$ are integers and $q =p^n$, for some $n\in \N$, 
where $p$ is a prime number.
Let us fix the following notation:
\begin{notations}\label{n1}
$$\begin{array}{lcl}
\displaystyle{\epsilon_0} & = & \displaystyle{\lceil{q}/{(ad+c)}\rceil - {q}/{(ad+c)}}\\

\displaystyle{\bar{\epsilon_0}} & = & \displaystyle{\lceil{(a+1)q}/{(ad+c)}\rceil - {(a+1)q}/{(ad+c)}}\\

\displaystyle{\epsilon_1} & = & \displaystyle{\lceil{q}/{c}\rceil - {q}/{c}}\\

\displaystyle{\epsilon_2} & = & \displaystyle{\lceil{q}/{d}\rceil - {q}/{d}}\\

\displaystyle{\delta_0} &  = & \displaystyle{\lceil{q}/{a}\rceil - {q}/{a}}\\

a_1 &  =  & a/\mbox{g.c.d} (a,c)\\
\displaystyle{\Delta_0} &  = &
 \displaystyle{\left(\frac{q}{c}+{\epsilon_1-1}\right) - 
\lfloor\frac{q}{a_1c}+\frac{\epsilon_1-1}{a_1}\rfloor a_1  }\\

\displaystyle{\Delta_1} &   = & 
\displaystyle{\left(\frac{q}{(ad+c)} + \epsilon_0-1\right) - 
\lfloor \frac{q}{a_1(ad+c)}+\frac{\epsilon_0-1}{a_1}\rfloor a_1 }\\

\displaystyle{\Delta_2} &  = &
 \displaystyle{\left(\frac{(a+1)q}{(ad+c)}+{\bar{\epsilon}}_0-1\right) -
\lfloor \frac{(a+1)q}{a_1(ad+c)}+\frac{{\bar{\epsilon}}_0-1}{a_1}\rfloor a_1,}\\

M_1 & = & \displaystyle{\frac{aa_1\delta_0^2}{2}+\delta_0\frac{2a_1-a}{2}+
\frac{(a_1-1)(6a_1-aa_1-a)}{12a_1}}.
\end{array}$$

\end{notations}
\vspace{10pt}
Note that if $a =1$ then $\delta_0 = \Delta_0 = \Delta_1 = \Delta_2 = M_1 =
0$. If $a\mid c$ ({\it i.e.}, $a_1 = 1$) then 
$ \Delta_0 = \Delta_1 = \Delta_2 =0$.

\begin{thm}\label{t1} Let $ X = {\bf F}_a$ be a ruled surface over a field of 
characteristic $p > 0$ and let $\sL$ be an ample line bundle on $X$ given by 
$\sL = cD_1+dD_4$, where 
 $a, c, d$ are arbitrary positive integers.
Let $ q = p^n$, where $n\geq 1$.

If $c \geq  d$ then  $HK(X, \sL)(q) = (*)$, where 

$$\begin{array}{lcl}
(*) 
& = & \displaystyle{q^3\left((c+\frac{ad}{2})\left[\frac{d}{3}+
\frac{(d+1)d}{6c(ad+c)}+\frac{1}{2}+\frac{1}{6d}\right]\right)}\\

& & \displaystyle{+ q^2\left( (c+\frac{ad}{2}) (d+1)
 \left[\frac{1}{4c}+\frac{1}{4(ad+c)}-\frac{d}{2(ad+c)}-\frac{1}{2d}\right]+
\frac{d+1}{2}\right)}\\

& & +\displaystyle{ q\left((c+\frac{ad}{2})(d+1) \left[\frac{1}{2}-\frac{d}{12}
 -\frac{1}{c}
 - \left[\frac{1}{a_1}M_1-1\right]\frac{ad}{c(ad+c)}
-\frac{d\epsilon_2^2}{2}+
\frac{d\epsilon_2}{2}-\epsilon_2\right] \right)}\\

& & \displaystyle{-q^0\left( (c+\frac{ad}{2})(d+1)
\left\{ \frac{(ad+c)\epsilon_0}{2}\left[\frac{(\epsilon_0-1)}{2} + 
\frac{ad+c}{3a}(\epsilon_0^2-\frac{3}{2}\epsilon_0 +\frac{1}{2})\right]\right.\right.}\\

& & \displaystyle{ + \frac{c\epsilon_1}{2}\left[\frac{(\epsilon_1-1)}{2} 
- \frac{c}{3a}(\epsilon_1^2-\frac{3}{2}\epsilon_1 +\frac{1}{2})\right]}\\

& & \displaystyle{
+

(\frac{\epsilon_1-\epsilon_0+\Delta_1-\Delta_0}{a_1})M_1+
\frac{a}{2}(\sum_{i=0}^{\Delta_0}\delta^2_{i} -
\sum_{i=0}^{\Delta_1}\delta^2_{i})}
\\

&& \displaystyle{\left.
+(1-\frac{a}{2})(\sum_{i=0}^{\Delta_0}\delta_{i} -
\sum_{i=0}^{\Delta_1}\delta_{i})+ \epsilon_0 -

\frac{c\epsilon_1}{2a}(\epsilon_1-1) + 
\frac{c\epsilon_0}{2a}(\epsilon_0-1)
+ \frac{d\epsilon_0}{2}(\epsilon_0-1)

\right\}}\\

& & \displaystyle{\left.+(c+\frac{ad}{2})(d\epsilon_2)
\left[\frac{\epsilon_2^2d}{3}-\frac{d\epsilon_2}{2}+\frac{d}{6}+
\frac{\epsilon_2}{2}-\frac{1}{2}\right]+\left[\frac{d^2\epsilon_2^2}{2}-\frac{d^2\epsilon_2}{2}+
d\epsilon_2\right]  \right)},\end{array}$$

and if $ c <  d $ then  
$$\begin{array}{lcl}
HK(X, \sL)(q) & = &  \displaystyle{(*) + q^3 
  \left\{ d(c+\frac{(d+1)a}{2}) \left[\frac{(a+1)^3}{6a(ad+c)}
-\frac{1}{6ac}-\frac{a}{6d}-\frac{1}{2d}+\frac{c}{6d^2} \right] \right\}}\\
& &- \displaystyle{q^2\left\{d(c+\frac{(d+1)a}{2})\left[\frac{(a+2)(a+1)^2}{4a(c
+ad)} + \frac{a-2}{4ac} -
\frac{(a+2)}{4d} - \frac{1}{d} +
\frac{c}{2d^2}\right]\right\}}\\
& & \displaystyle{+ 
 q^1f_1(a, c, d, {\bar{\epsilon_0}}, \epsilon_2, \delta_0) + q^0f_0(a, c, d, \epsilon_1, 
\epsilon_2, {\bar{\epsilon_0}},  \Delta_0, \Delta_2,\delta_0, \cdots, 
\delta_{a_1-1})},\end{array}
$$
where $\epsilon_i, \delta_i's$ and $\Delta_i's$ are the above periodic 
functions in $q$, and 
$f_i(x_1, \ldots)$ denote  polynomials in $x_1, \ldots$.
 Moreover $f_i(x_1,\ldots)$ can be written down explicitly by 
putting together the computations given in the proof. 
\end{thm}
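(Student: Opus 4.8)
The plan is to assemble the formula for $HK(X,\sL)(q)$ by combining the identity
$$HK(X, \sL)(q) = \sum_{m=0}^{q-1} h^0(X, \sL^m) + \sum_{m\geq 0}|\coker~\Phi_m|$$
with the explicit value of $\sum_{m=0}^{q-1} h^0(X,\sL^m)$ already recorded in Equation~(\ref{h1}), and with the decomposition of $|\coker~\Phi_m|$ from Corollary~\ref{l1}. By that corollary, for each $m$ the quantity $|\coker~\Phi_m|$ is a sum of three pieces: a term built from $\sum_{\alpha\geq t-1}d(c+\alpha+1+\tfrac{(d-1)a}{2})|A_{\alpha,-1}|$, a term $(d+1)(c+\tfrac{ad}{2})\sum_{\beta\geq 0}|A_{-1,\beta}|$, and a correction $-(dc+d(d+1)\tfrac a2)\sum_{\beta\geq 1}|A_{-1,\beta}|$. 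Lemma~\ref{l2} evaluates the first piece exactly (it vanishes for $m\geq q/d$ and equals $d(q-md-1)[(q+m)(c+\tfrac{ad}{2})+1]$ for $0\le m<q/d$); Lemma~\ref{l3} evaluates $\sum_{\beta\geq 0}|A_{-1,\beta}|$ on the three ranges $0\le m<q/(c+ad)$, $q/(c+ad)\le m<q/c$, $m\ge q/c$; and Lemma~\ref{l7} evaluates $\sum_{\beta\geq 1}|A_{-1,\beta}|$, which vanishes identically when $c\ge d$ and otherwise splits over the ranges $q/d\le m<(a+1)q/(ad+c)$ and $(a+1)q/(ad+c)\le m<q/c$. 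So $\sum_{m\ge 0}|\coker~\Phi_m|$ is a finite sum of $m$-polynomials over a bounded number of $m$-intervals whose endpoints are rational multiples of $q$.

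The main work is then purely a matter of summing polynomials in $m$ over integer intervals $[0,\lambda q)$ where $\lambda\in\{1/d,\ 1/(c+ad),\ 1/c,\ (a+1)/(c+ad)\}$, and collecting powers of $q$. First I would treat the case $c\ge d$: here the $\sum_{\beta\ge 1}$ contribution is zero, so $HK(X,\sL)(q)$ is $(\ref{h1})$ plus the Lemma~\ref{l2} sum over $0\le m< q/d$ plus the Lemma~\ref{l3} sums over $0\le m<q/(c+ad)$ and over $q/(c+ad)\le m<q/c$. Each such sum of a quadratic (or cubic, after multiplying by the linear coefficient $c+\alpha+1$) in $m$ from $0$ to $\lceil\lambda q\rceil-1$ produces, via the usual Faulhaber formulas, a leading $q^3$ term, a $q^2$ term, a $q$ term, and a constant term; the fractional parts $\epsilon_0,\epsilon_1,\epsilon_2$ and the ceiling $\lceil(q-mc)/a\rceil$ appearing in Lemma~\ref{l3}(2) and Lemma~\ref{l7} are exactly what forces the periodic corrections. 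The ceiling $\lceil(q-mc)/a\rceil$ as $m$ runs over an interval is handled by writing $q-mc = a_1 c\cdot(\text{something}) + (\text{residue mod }a_1)$ with $a_1 = a/\gcd(a,c)$, which is why $a_1$, $\Delta_0,\Delta_1,\Delta_2$, $\delta_0,\dots,\delta_{a_1-1}$ and the aggregate $M_1 = \tfrac{aa_1\delta_0^2}{2}+\delta_0\tfrac{2a_1-a}{2}+\tfrac{(a_1-1)(6a_1-aa_1-a)}{12a_1}$ enter the constant and linear terms; summing $\sum_i \delta_i$ and $\sum_i \delta_i^2$ over a residue class telescopes into $M_1$ up to boundary terms governed by the $\Delta$'s. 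After carrying out these sums and matching coefficients one obtains $(*)$. For $c<d$ one repeats the procedure but now the nonzero $\sum_{\beta\ge 1}$ contribution of Lemma~\ref{l7} over its two ranges adds extra polynomial-in-$m$ sums over intervals with endpoints $q/d$ and $(a+1)q/(c+ad)$; these contribute the displayed additional $q^3$ and $q^2$ terms and modify the $q^1$ and $q^0$ terms, which are then packaged as $f_1$ and $f_0$.

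The genuine obstacle is bookkeeping rather than conceptual: one must (i) be careful that the interval endpoints $\lambda q$ need not be integers, so sums run to $\lceil\lambda q\rceil - 1$ and the discrepancy contributes the $\epsilon$- and $\bar\epsilon$-type periodic terms at orders $q^1$ and $q^0$; (ii) correctly handle the nested ceiling $\lceil(q-mc)/a\rceil$ inside a sum over $m$, which requires partitioning $m$ by its residue modulo $a_1$ and is the source of the $M_1$, $\Delta_i$, $\delta_i$ expressions; and (iii) check the degenerate cases $a=1$ (all of $\delta_0,\Delta_i,M_1$ vanish) and $a\mid c$ (all $\Delta_i$ vanish) so that the stated formula degenerates correctly. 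I would organize the computation as a short lemma computing $\sum_{m=0}^{\lceil\lambda q\rceil-1} P(m)$ for a generic polynomial $P$ and a short lemma computing $\sum_{m} g(m)\lceil(q-mc)/a\rceil^j$ in terms of $a_1$ and the $\delta_i$, then substitute; the final formulas $(*)$, $f_1$, $f_0$ are obtained by collecting like powers of $q$, with the explicit forms of $f_0,f_1$ "written down by putting together the computations" exactly as the statement promises.
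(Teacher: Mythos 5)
Your proposal follows essentially the same route as the paper: the same splitting of $HK(X,\sL)(q)$ into $\sum_m h^0(X,\sL^m)$ plus $\sum_m|\coker\,\Phi_m|$, the same use of Corollary~\ref{l1} with Lemmas~\ref{l2}, \ref{l3}, \ref{l7} to reduce to polynomial sums over intervals with endpoints $q/d$, $q/(c+ad)$, $(a+1)q/(c+ad)$, $q/c$, and the same two auxiliary devices (Faulhaber-type sums with ceiling corrections, which is Remark~\ref{r1}, and the residue-class analysis of $\lceil(q-mc)/a\rceil$ modulo $a_1$ producing $M_1$, $\Delta_i$, $\delta_i$, which is Lemma~\ref{l8}). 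The bookkeeping you defer is exactly what the paper's Lemma~\ref{l6} and the final assembly carry out, so the plan is correct and matches the paper's proof.
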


\begin{rmk}\label{r1}\quad One can check that, for any  triple
 $(d, q, \epsilon_2)$, where $d, q$ are positive integers and $\epsilon_2 =
\lceil q/d\rceil - q/d$, we have  

$$\begin{array}{lcl}
\displaystyle{d\sum_{m=0}^{\lceil {q/d}\rceil -1}} 1  & = & 
  q + d\epsilon_2\\
\displaystyle{d\sum_{m=0}^{\lceil {q/d}\rceil -1} md} & =  & 
\displaystyle{\frac{q^2}{2} + q\left(d\epsilon_2 - \frac{d}{2}\right)
+ q^0\left(\frac{d^2\epsilon_2^2}{2} - \frac{d^2\epsilon_2}{2}\right)}\\

\displaystyle{\sum_{m=0}^{\lceil {q/d}\rceil -1} (q-md)} &  = & \displaystyle{\frac{q^2}{2d} 
+ \frac{q}{2} + \frac{d\epsilon_2}{2}(1-\epsilon_2)}\\

\displaystyle{\sum_{m=0}^{\lceil {q/d}\rceil -1} d(q-md-1)} & = & \displaystyle{\frac{q^2}{2} 
+ q\left(\frac{d}{2}-1\right) -q^0\left(\frac{d^2\epsilon_2^2}{2}-\frac{d^2\epsilon_2}{2}+d\epsilon_2\right)}\\

\end{array}$$
$$
\begin{array}{lcl}
\displaystyle{\sum_{m=0}^{\lceil {q/d}\rceil -1} m^2d^3} & = & \displaystyle{\frac{q^3}{3} + q^2\left(d\epsilon_2-
\frac{d}{2}\right)
 +q\left(d^2\epsilon_2^2-d^2\epsilon_2+\frac{d^2}{6}\right)}\\

& &

+\displaystyle{q^0\left(\frac{d^3\epsilon_2^3}{3}-\frac{d^3\epsilon_2^2}{2}+\frac{d^3\epsilon_2}{6}\right)}\\

\displaystyle{\sum_{m=0}^{\lceil {q/d}\rceil -1} d^2m(q-md-1)} & = & \displaystyle{\frac{q^3}{6} 
- q^2\left(\frac{1}{2}\right) - q\left(\frac{d^2\epsilon_2(\epsilon_2-1)}{2}+\frac{d^2}{6} + d\epsilon_2 -
 \frac{d}{2}\right)}\\ 

 & & \displaystyle{-q^0\left((d^2\epsilon_2)(\frac{\epsilon_2^2d}{3}-\frac{d\epsilon_2}{2}+
\frac{d}{6}+\frac{\epsilon_2-1}{2})\right)}\\

\displaystyle{\sum_{m=0}^{\lceil {q/d}\rceil -1} (q-md)^2} & 
= & \displaystyle{\frac{q^3}{3d} + \frac{q^2}{2} 
+ q\left(\frac{d}{6}\right) + q^0\left(\frac{d^2\epsilon_2}{3}(\epsilon_2^2-\frac{3}{2}\epsilon_2 +
\frac{1}{2})\right)  }\end{array}$$

Note that $q$ here need not be a power of prime number. In particular
 the computation holds for triples like $(c+ad, (a+1)q, {\bar{\epsilon_0}})$.
\end{rmk}

\vspace{5pt}

\begin{lemma}\label{l8} Let $mc < q$ and 
let $\displaystyle{\delta_m}  =  \displaystyle{\lceil{(q-mc)}/{a}\rceil
 - {(q-mc)}/{a}}$. Then
\begin{enumerate}
\item $$
\sum_{m = \lceil {q/(c+ad)}\rceil}^{\lceil 
\frac{q}{c}\rceil -1}(\frac{a\delta_m^2}{2} +(1-\frac{a}{2})\delta_m-1) =  
q\left[\frac{ad}{c(ad+c)}(\frac{M_1}{a_1}
- 1)\right] + \left[ 
(\frac{\epsilon_1 - \epsilon_0 + \Delta_1- \Delta_0}{a_1})M_1 \right.$$
$$\quad\quad\quad\quad\quad\quad\quad\quad
 +\left.\frac{a}{2}(\sum_{i=0}^{\Delta_0}\delta^2_{i} -
\sum_{i=0}^{\Delta_1}\delta^2_{i})
 \displaystyle{+(1-\frac{a}{2})(\sum_{i=0}^{\Delta_0}\delta_{i} -
\sum_{i=0}^{\Delta_1}\delta_{i})
+ (\epsilon_0-\epsilon_1)}\right],$$
and 

\item $$\displaystyle{\sum_{m = \lceil {(a+1)q/ad+c}\rceil }^{\lceil 
q/c\rceil -1} - \frac{a}{2}\delta_m^2+ (\frac{a}{2}-1)\delta_m
= q\left[\frac{a(c-d)}{a_1c(ad+c)}(M_1)\right] + \left[
(\frac{{\bar{\epsilon_0}} - {\epsilon_1}+\Delta_0-\Delta_2)}{a_1})(M_1)
 \right.}$$
$$\quad\quad\quad\quad\quad\quad\quad +\displaystyle{
\frac{a-2}{2}(\sum_{i=0}^{\Delta_0}\delta_{i} -
\sum_{i=0}^{\Delta_2}\delta_{i})}
 \displaystyle{\left.
-\frac{a}{2}(\sum_{i=0}^{\Delta_0}\delta^2_{i} -
\sum_{i=0}^{\Delta_2}\delta^2_{i})\right]},
$$\end{enumerate}
where $\Delta_0, \Delta_1, \Delta_2$ and $M_1$
 are defined as in Notation~\ref{n1}.
\end{lemma}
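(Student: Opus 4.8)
The plan is to evaluate the two sums in closed form by exploiting the periodicity of $m\mapsto\delta_m$ as $m$ ranges over a complete residue system modulo $a_1=a/\gcd(a,c)$. The key observation is that $\delta_m=\lceil(q-mc)/a\rceil-(q-mc)/a$ depends on $m$ only through $mc\bmod a$, hence only through $m\bmod a_1$ (since $\gcd(c,a)\cdot a_1=a$ forces $mc$ to cycle through the multiples of $\gcd(a,c)$ in $\Z/a\Z$ as $m$ cycles mod $a_1$). Concretely I would show $\delta_m=\delta_{m'}$ whenever $m\equiv m'\pmod{a_1}$, and that over one period $m=0,1,\dots,a_1-1$ the values $\delta_m$ run (in some order) through $\{0,1/a_1,2/a_1,\dots,(a_1-1)/a_1\}$ — equivalently the $\delta_i$ of Notation~\ref{n1} are exactly $\{i/a_1\cdot(\text{something})\}$; I would pin down the correspondence so that $\sum_{i=0}^{a_1-1}$ of any polynomial in $\delta_i$ equals $\sum$ over one period of the same polynomial in $\delta_m$. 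Granting this, $M_1$ is precisely $\sum_{i=0}^{a_1-1}\bigl(\tfrac{a}{2}\delta_i^2+(1-\tfrac{a}{2})\delta_i-1\bigr)$ plus $a_1$ (the $-1$'s), or rather I would verify by direct computation that $M_1=\sum_{i=0}^{a_1-1}\bigl(\tfrac{a}{2}\delta_i^2+(1-\tfrac a2)\delta_i\bigr)$ using $\sum i=a_1(a_1-1)/2$ and $\sum i^2=(a_1-1)a_1(2a_1-1)/6$ — this identification of $M_1$ with a per-period sum is the linchpin of the whole lemma.

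**Reducing the range sum to full periods plus a tail.** For part (1), the summation runs over $m=\lceil q/(c+ad)\rceil,\dots,\lceil q/c\rceil-1$. Write the lower endpoint as $q/(c+ad)+\epsilon_0$ and the number of terms as $\bigl(q/c+\epsilon_1-1\bigr)-\bigl(q/(c+ad)+\epsilon_0-1\bigr)+1 = q\cdot\frac{ad}{c(ad+c)}+(\epsilon_1-\epsilon_0)$. Split this range into a maximal block of complete periods of length $a_1$ together with a leftover arc at each end; the complete-periods contribution is $\bigl(\text{number of full periods}\bigr)\cdot\sum_{i=0}^{a_1-1}(\tfrac a2\delta_i^2+(1-\tfrac a2)\delta_i-1)$, and the number of full periods is $\frac1{a_1}$ times (total length minus leftover), where the leftover is controlled by $\Delta_0$ and $\Delta_1$ — indeed $\Delta_0$ and $\Delta_1$ in Notation~\ref{n1} are by construction exactly the residues of the two endpoints (in the shifted variable) modulo $a_1$. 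The $q$-linear term then comes out as $q\cdot\frac{ad}{c(ad+c)}\cdot\frac1{a_1}M_1$ after subtracting the $-1$-count $q\cdot\frac{ad}{c(ad+c)}$, matching the claimed $q\bigl[\frac{ad}{c(ad+c)}(\frac{M_1}{a_1}-1)\bigr]$; the constant term collects the leftover-arc sums $\frac a2(\sum_{i=0}^{\Delta_0}\delta_i^2-\sum_{i=0}^{\Delta_1}\delta_i^2)+(1-\frac a2)(\sum_{i=0}^{\Delta_0}\delta_i-\sum_{i=0}^{\Delta_1}\delta_i)$, the $-1$-count correction $(\epsilon_0-\epsilon_1)$, and the fractional-period correction $\frac{\Delta_1-\Delta_0}{a_1}M_1$, which is precisely the stated right-hand side.

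**Part (2) and the obstacle.** Part (2) is handled identically: the summand $-\frac a2\delta_m^2+(\frac a2-1)\delta_m$ has per-period sum $\sum_{i=0}^{a_1-1}(-\frac a2\delta_i^2+(\frac a2-1)\delta_i)=-M_1$ (note this is $-1$ times the "$M_1$-part" of (1), i.e. without the $-1$'s — so now there is no $-1$-count term), the range is $m=\lceil(a+1)q/(ad+c)\rceil,\dots,\lceil q/c\rceil-1$ of total length $q\cdot\frac{a(c-d)}{c(ad+c)}+(\epsilon_1-\bar\epsilon_0)$ — note $(a+1)/(ad+c)-1/c=(a c+c-ad-c)/(c(ad+c))=a(c-d)/(c(ad+c))$, which explains the sign and why this sum is only relevant when $c<d$ — and the endpoint residues mod $a_1$ are $\Delta_2$ and $\Delta_0$; assembling gives the stated formula with the roles of the two $\Delta$-arcs swapped and an overall sign flip. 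The main obstacle, and the step I would spend the most care on, is the bookkeeping that identifies the leftover arcs correctly: one must check that writing the endpoints as $q/c+\epsilon_1-1$, $q/(ad+c)+\epsilon_0-1$, $(a+1)q/(ad+c)+\bar\epsilon_0-1$ makes them \emph{integers}, that their residues mod $a_1$ are exactly $\Delta_0,\Delta_1,\Delta_2$ as defined via the floor expressions in Notation~\ref{n1}, and that the partial sums $\sum_{i=0}^{\Delta_j}\delta_i$ index into the \emph{same} ordered list of period-values $\delta_0,\dots,\delta_{a_1-1}$ that appears in $M_1$ — getting the indexing convention consistent (in particular whether the arc is a head $\{0,\dots,\Delta\}$ or a tail of the period, and how an endpoint landing mid-period shifts the origin of the cyclic labeling) is where all the sign and off-by-one subtleties live. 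Everything else is the elementary arithmetic of arithmetic progressions, to be carried out using the standard sums for $\sum i$ and $\sum i^2$.
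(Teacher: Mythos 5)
Your proposal follows essentially the same route as the paper's proof: exploit the period-$a_1$ periodicity of $\delta_m$, identify $M_1$ as the per-period sum of $\frac{a}{2}\delta_m^2+(1-\frac{a}{2})\delta_m$ via the parametrization $\delta_i=\delta_0+i/a_1$, write each sum $\sum_{m=0}^{N-1}$ as (number of full periods) times (per-period sum) plus an initial-segment remainder $\sum_{i=0}^{\Delta}\delta_i$ with $\Delta=N-1-\lfloor (N-1)/a_1\rfloor a_1$, subtract the two endpoint sums, and use the term count $\lceil q/c\rceil-\lceil q/(ad+c)\rceil=q\frac{ad}{c(ad+c)}+\epsilon_1-\epsilon_0$ to account for the $-1$'s. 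The only discrepancies are cosmetic slips in your sketch (the period values are $\delta_0+i/a_1$ rather than $i/a_1$, and the length of the range in part (2) is $q\frac{a(d-c)}{c(ad+c)}+\epsilon_1-\bar{\epsilon_0}$, which is positive when $c<d$), neither of which changes the strategy.
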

\begin{proof}Note that 
$\delta_m$ is a periodic function in $m$ of period $a_1
 = a/\mbox{g.c.d.}(a,c)$, therefore $\delta_{a_1+x} = \delta_x$, for 
every integer $x \geq 0$.
Moreover 
$$\sum_{i=0}^{a_1-1}\delta_i = \sum_{i=0}^{a_1-1}(\delta_0+\frac{i}{a_1}) = 
\delta_0a_1 + \frac{a_1-1}{2}.$$
 
\noindent{Case~$(1)$}\quad
$$\begin{array}{lcl}
\displaystyle{\sum_{m=0}^{\lceil {q/c}\rceil -1}\delta_m} & = & (\delta_0+\cdots+\delta_{a_1-1})+\cdots
+(\delta_{(l-1)a_1}+\cdots +\delta_{la_1-1})+(\delta_{la_1}+\cdots + 
\delta_{\lceil {q/c}\rceil -1})\\
 & = & \displaystyle{l(\delta_0a_1+
\frac{a_1-1}{2})+(\delta_0+\cdots + \delta_{\Delta_0})},\end{array}$$
where 
$l = \lfloor(\lceil{q}/{c}\rceil-1)\frac{1}{a_1}\rfloor $ and 
 \begin{equation}\label{e9}
 \displaystyle{\Delta_0} = \left(\lceil\frac{q}{c}\rceil-1\right) - la_1 =
 \displaystyle{\frac{q}{c}+{\epsilon_1-1} - 
\lfloor\frac{q}{a_1c}+\frac{\epsilon_1-1}{a_1}\rfloor a_1}.\end{equation}

Similarly  
$$\sum_{m=0}^{\lceil {q/(ad+c)}\rceil -1}\delta_m =l_1(\delta_0a_1+
\frac{a_1-1}{2})+(\delta_0+\cdots + \delta_{\Delta_1}),$$
where 
$l_1= \lfloor(\lceil{q}/{(ad+c)}\rceil-1)\frac{1}{a_1}\rfloor $ and 

\begin{equation}\label{e10}\displaystyle{\Delta_1}  = 
 \displaystyle{\left(\lceil
\frac{q}{ad+c}\rceil-1\right) -l_1a_1} =
\displaystyle{\frac{q}{(ad+c)} + \epsilon_0-1 - 
\lfloor \frac{q}{a_1(ad+c)}+\frac{\epsilon_0-1}{a_1}\rfloor a_1. }
\end{equation}
Therefore 

$$\sum_{m={\lceil {q/(ad+c)}\rceil}}^{\lceil {q/c}\rceil -1}\delta_m =
(l-l_1)(\delta_0a_1+\frac{a_1-1}{2})+(
\sum_{i=0}^{\Delta_0}\delta_{i} -
\sum_{i=0}^{\Delta_1}\delta_{i})$$
and 
since 
$$\sum_{m=0}^{a_1-1}\delta_m^2 = a_1\delta_0^2 + 
(a_1-1)\delta_0 + \frac{(a_1-1)(2a_1-1)}{6a_1},$$

we have
$$\sum_{m={\lceil {q/(ad+c)}\rceil}}^{\lceil {q/c}\rceil -1}\delta_m^2 =
(l-l_1)\left[a_1\delta_0^2+ \delta_0(a_1-1)+\frac{(a_1-1)(2a_1-1)}{6a_1}\right]
+(\sum_{i=0}^{\Delta_0}\delta^2_{i} -
\sum_{i=0}^{\Delta_1}\delta^2_{i}),$$
where 
$$l-l_1= \frac{ad}{a_1c(ad+c)}q+(\frac{\epsilon_1-\epsilon_0+
\Delta_1-\Delta_0}{a_1}),$$ 
from Equations~(\ref{e9}) and (\ref{e10}).Let us write this as $l-l_1 = xq+y$.
Therefore
$$\sum\frac{a}{2}\delta_m^2+ \sum (1-\frac{a}{2})\delta_m-\sum 1$$
$$ = \frac{a}{2}(xq+y)\left[a_1\delta_0^2+\delta_0(a_1-1)+
\frac{(a_1-1)(2a_1-1)}{6a_1}\right] + 
\frac{a}{2}(\sum_{i=0}^{\Delta_0}\delta^2_{i} -
\sum_{i=0}^{\Delta_1}\delta^2_{i})$$

$$+(1-\frac{a}{2})(xq+y)\left[\delta_0a_1+\frac{a_1-1}{2}\right]
+ (1-\frac{a}{2})\left(\sum_{i=0}^{\Delta_0}\delta_{i} -
\sum_{i=0}^{\Delta_1}\delta_{i}\right)
- \lceil{q/c}\rceil+\lceil{q/(ad+c)}\rceil.$$
$$ =q[xM_1] +  yM_1 +  \frac{a}{2}(\sum_{i=0}^{\Delta_0}\delta^2_{i} -
\sum_{i=0}^{\Delta_1}\delta^2_{i}) + 
(1-\frac{a}{2})\left(\sum_{i=0}^{\Delta_0}\delta_{i} -
\sum_{i=0}^{\Delta_1}\delta_{i}\right)
- q\frac{ad}{c(ad+c)} + (\epsilon_0-\epsilon_1),$$
where 
$$M_1 = \frac{a}{2} \left[a_1\delta_0^2+\delta_0(a_1-1)+
\frac{(a_1-1)(2a_1-1)}{6a_1})\right] + 
(1-\frac{a}{2})\left[\delta_0a_1+\frac{a_1-1}{2}\right]$$
$$  =  \displaystyle{\frac{aa_1\delta_0^2}{2}+\delta_0\frac{2a_1-a}{2}+
\frac{(a_1-1)(6a_1-aa_1-a)}{12a_1}}.$$

Therefore 

$$\sum_{m={\lceil {q/(ad+c)}\rceil}}^{\lceil {q/c}\rceil -1}
 (\frac{a\delta_m}{2}+1)(\delta_m-1) = 
q\frac{ad}{ad+c}(\frac{M_1}{a_1}-\frac{1}{c}) +
\left[ 
(\frac{\epsilon_1 - \epsilon_0 + \Delta_1- \Delta_0}{a_1})M_1 \right.$$
$$\quad\quad\quad\quad\quad\quad +
\left.\frac{a}{2}(\sum_{i=0}^{\Delta_0}\delta^2_{i} -
\sum_{i=0}^{\Delta_1}\delta^2_{i})
 \displaystyle{+(1-\frac{a}{2})(\sum_{i=0}^{\Delta_0}\delta_{i} -
\sum_{i=0}^{\Delta_1}\delta_{i})
+ (\epsilon_0-\epsilon_1)}\right].$$

This proves part~$(1)$ of the lemma

\noindent~$(2)$\quad
As in the previous case, one can see that 
$$\sum_{m=\lceil {(a+1)q/(ad+c)}\rceil}^{\lceil q/c\rceil -1}\delta_m =
(l-l_2)(\delta_0a_1+
\frac{a_1-1}{2})+(\sum_{i=0}^{\Delta_0}\delta_{i} -
\sum_{i=0}^{\Delta_2}\delta_{i})$$

and
$$\sum_{m={\lceil {(a+1)q/(ad+c)}\rceil}}^{\lceil {q/c}\rceil -1}\delta_m^2 =
(l-l_2)\left[a_1\delta_0^2+ \delta_0(a_1-1)+\frac{(a_1-1)(2a_1-1)}{6a_1}\right]
+(\sum_{i=0}^{\Delta_0}\delta^2_{i} -
\sum_{i=0}^{\Delta_2}\delta^2_{i}),$$
where 
$l_2= \lfloor(\lceil{(a+1)q}/{(ad+c)}\rceil-1)\frac{1}{a_1}\rfloor $
and 
$$\displaystyle{\Delta_2} = (\lceil\frac{(a+1)q}{(ad+c)}\rceil-1)-l_2a_1 = 
\displaystyle{\frac{(a+1)q}{(ad+c)}+{\bar{\epsilon}}_0-1 -
\lfloor \frac{(a+1)q}{a_1(ad+c)}+\frac{{\bar{\epsilon}}_0-1}{a_1}\rfloor 
a_1}.$$

Therefore $l-l_2= \frac{a(d-c)}{a_1c(ad+c)}q+
(\frac{\epsilon_1-{\bar{\epsilon_0}}+
\Delta_2-\Delta_0}{a_1})$.

Hence 

$$\displaystyle{ 
-\sum \frac{a}{2}\delta_m^2 + \sum (\frac{a}{2}-1)\delta_m } $$
$$ = q\frac{a(c-d)}{a_1c(ad+c)}(M_1) + 
(\frac{{\bar{\epsilon_0} - \epsilon_1}+\Delta_0-\Delta_2}{a_1})(M_1)
+\displaystyle{
\frac{a-2}{2}(\sum_{i=0}^{\Delta_0}\delta_{i} -
\sum_{i=0}^{\Delta_2}\delta_{i})}
 \displaystyle{
-\frac{a}{2}(\sum_{i=0}^{\Delta_0}\delta^2_{i} -
\sum_{i=0}^{\Delta_2}\delta^2_{i})}.$$

This proves the part~$(2)$ and hence the lemma.
\end{proof}

\vspace{5pt}

\begin{lemma}\label{l6}Let 
$$\begin{array}{lcl}
A & = &\sum_{m=0}^{\lceil q/d\rceil-1}d(q-md-1)\left[ (q+m)\left(c+\frac{ad}{2}\right)+1\right]\\
& & + 
\displaystyle{(d+1)(c+\frac{ad}{2})
\left[\sum_{m=0}^{\lceil q/(c+ad)\rceil-1}(md+1)
\left(q-mc-1-\frac{amd}{2}\right)\right.}\\
& & +\displaystyle{\left.\sum_{\lceil q/(c+ad)\rceil}^{\lceil q/c\rceil -1}  
 \lceil \frac{q-mc}{a}\rceil \left(q-mc-1-\frac{a}{2}
\left(\lceil \frac{q-mc}{a}\rceil-1\right)\right)\right]}, \end{array}$$
then
$$\begin{array}{lcl}
A 
& = & \displaystyle{q^3\left((c+\frac{ad}{2})\left[
\frac{(d+1)d}{6c(ad+c)}+\frac{1}{2}+\frac{1}{6d}\right]\right)}\\

& & \displaystyle{+ q^2\left( (c+\frac{ad}{2}) (d+1)
 \left[\frac{1}{4c}+\frac{1}{4(ad+c)}-\frac{d}{2(ad+c)}\right]+
\left(\frac{d}{2}-1-\frac{1}{2d}\right)\left(c+\frac{ad}{2}\right)
+\frac{1}{2}\right)}\\

& & +\displaystyle{ q\left((c+\frac{ad}{2})(d+1) \left[\frac{1}{2}-\frac{d}{12}
 -\frac{1}{c}
 - \left[\frac{1}{a_1}M_1-1\right]\frac{ad}{c(ad+c)}
-\frac{d\epsilon_2^2}{2}+
\frac{d\epsilon_2}{2}-\epsilon_2\right] \right.}\\
& &
 +\displaystyle{\left.(c+\frac{ad}{2})(\frac{1}{2}-\frac{d}{6})+
(\frac{d}{2}-1)\right)}\\
& & \displaystyle{-q^0\left( (c+\frac{ad}{2})(d+1)
\left\{ \frac{(ad+c)\epsilon_0}{2}\left[\frac{(\epsilon_0-1)}{2} + 
\frac{ad+c}{3a}(\epsilon_0^2-\frac{3}{2}\epsilon_0 +\frac{1}{2})\right]\right.\right.}\\

& & \displaystyle{ + \frac{c\epsilon_1}{2}\left[\frac{(\epsilon_1-1)}{2} 
- \frac{c}{3a}(\epsilon_1^2-\frac{3}{2}\epsilon_1 +\frac{1}{2})\right]
+
(\frac{\epsilon_1-\epsilon_0+\Delta_1-\Delta_0}{a_1})M_1+
\frac{a}{2}(\sum_{i=0}^{\Delta_0}\delta^2_{i} -
\sum_{i=0}^{\Delta_1}\delta^2_{i})}
\\

&& \displaystyle{\left.
+(1-\frac{a}{2})(\sum_{i=0}^{\Delta_0}\delta_{i} -
\sum_{i=0}^{\Delta_1}\delta_{i})+ \epsilon_0 -

\frac{c\epsilon_1}{2a}(\epsilon_1-1) + 
\frac{c\epsilon_0}{2a}(\epsilon_0-1)
+ \frac{d\epsilon_0}{2}(\epsilon_0-1)

\right\}}\\

& & \displaystyle{\left.+(c+\frac{ad}{2})(d\epsilon_2)
\left[\frac{\epsilon_2^2d}{3}-\frac{d\epsilon_2}{2}+\frac{d}{6}+
\frac{\epsilon_2}{2}-\frac{1}{2}\right]+\left[\frac{d^2\epsilon_2^2}{2}-\frac{d^2\epsilon_2}{2}+
d\epsilon_2\right]  \right)},\end{array}$$
\end{lemma}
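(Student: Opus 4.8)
The plan is to split $A$ into the three sums occurring in its definition, evaluate each as an explicit polynomial in $q$ with periodic coefficients, and add. Write $A=A_1+A_2+A_3$, where $A_1=\sum_{m=0}^{\lceil q/d\rceil-1}d(q-md-1)\left[(q+m)(c+\frac{ad}{2})+1\right]$, $A_2=(d+1)(c+\frac{ad}{2})\sum_{m=0}^{\lceil q/(c+ad)\rceil-1}(md+1)(q-mc-1-\frac{amd}{2})$, and $A_3=(d+1)(c+\frac{ad}{2})\sum_{m=\lceil q/(c+ad)\rceil}^{\lceil q/c\rceil-1}\lceil\frac{q-mc}{a}\rceil\left(q-mc-1-\frac{a}{2}(\lceil\frac{q-mc}{a}\rceil-1)\right)$.

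For $A_1$, I would rewrite the summand as $(c+\frac{ad}{2})\left(q\cdot d(q-md-1)+m\cdot d(q-md-1)\right)+d(q-md-1)$ and use $m\cdot d(q-md-1)=\frac{1}{d}\,d^2m(q-md-1)$; summing over $0\le m\le\lceil q/d\rceil-1$ via the identities of Remark~\ref{r1} for $\sum d(q-md-1)$ and for $\sum d^2m(q-md-1)$ then gives $A_1$ as a cubic in $q$ whose coefficients are polynomials in $\epsilon_2$.

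For $A_2$, I would expand $(md+1)(q-mc-1-\frac{amd}{2})=-d(c+\frac{ad}{2})m^2+(dq-d-c-\frac{ad}{2})m+(q-1)$, so that $A_2$ is a linear combination of the power sums $\sum_{m=0}^{N-1}m^2$, $\sum_{m=0}^{N-1}m$, $\sum_{m=0}^{N-1}1$ with $N=\lceil q/(c+ad)\rceil=q/(c+ad)+\epsilon_0$; these are exactly the identities of Remark~\ref{r1} with $d$ replaced by $c+ad$ and $\epsilon_2$ by $\epsilon_0$ (the remark explicitly allows such triples), yielding $A_2$ as a cubic in $q$ with coefficients polynomial in $\epsilon_0$. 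For $A_3$, I would set $\delta_m=\lceil\frac{q-mc}{a}\rceil-\frac{q-mc}{a}$ and substitute $\lceil\frac{q-mc}{a}\rceil=\frac{q-mc}{a}+\delta_m$; a short computation shows that the $\delta_m\cdot(q-mc)$ cross terms cancel and the summand equals $\frac{(q-mc)^2}{2a}+(\frac{1}{2}-\frac{1}{a})(q-mc)-\left(\frac{a}{2}\delta_m^2+(1-\frac{a}{2})\delta_m\right)$. Writing $\sum_{m=\lceil q/(c+ad)\rceil}^{\lceil q/c\rceil-1}=\sum_{m=0}^{\lceil q/c\rceil-1}-\sum_{m=0}^{\lceil q/(c+ad)\rceil-1}$, the $(q-mc)^2$ and $(q-mc)$ parts of the first partial sum come from the identities of Remark~\ref{r1} with $d$ replaced by $c$ and $\epsilon_2$ by $\epsilon_1$, and those of the second partial sum reduce to the power sums $\sum_{m=0}^{N-1}m^k$ already handled for $A_2$. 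For the $\delta_m$ part I would write $\frac{a}{2}\delta_m^2+(1-\frac{a}{2})\delta_m=\left(\frac{a}{2}\delta_m^2+(1-\frac{a}{2})\delta_m-1\right)+1$, so its sum over the range equals $-\left(\mbox{right side of Lemma~\ref{l8}(1)}\right)-\left(\lceil q/c\rceil-\lceil q/(c+ad)\rceil\right)$, and $\lceil q/c\rceil-\lceil q/(c+ad)\rceil=\frac{ad}{c(c+ad)}q+\epsilon_1-\epsilon_0$; this is where $M_1$, $\Delta_0$, $\Delta_1$ and $\delta_0,\ldots,\delta_{a_1-1}$ (as in Notation~\ref{n1}) enter.

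Finally I would add $A_1+A_2+A_3$ and collect the coefficients of $q^3,q^2,q,q^0$. As a sanity check on the cubic term, the $q^3$-parts of $A_2$ and $A_3$ are $(d+1)(c+\frac{ad}{2})\frac{d(c+2ad)}{6(c+ad)^3}q^3$ and $(d+1)(c+\frac{ad}{2})\frac{a^2d^3}{6c(c+ad)^3}q^3$, which combine through $c(c+2ad)+a^2d^2=(c+ad)^2$ into $(c+\frac{ad}{2})\frac{(d+1)d}{6c(ad+c)}q^3$, and together with the cubic part $(c+\frac{ad}{2})(\frac{1}{2}+\frac{1}{6d})q^3$ of $A_1$ this matches the claimed $q^3$-coefficient; the same kind of elementary algebra produces the $q^2$, $q$ and $q^0$ coefficients. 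The only real difficulty is bookkeeping: there is no new idea beyond the identities already established, but the computation is long and the periodic quantities $\epsilon_0,\epsilon_1,\epsilon_2,\delta_i,\Delta_0,\Delta_1,M_1$ must be tracked through many cancellations, so it is safest to keep $A_1$, $A_2$, $A_3$ as separate explicit polynomials in $q$ and only sum them at the very end.
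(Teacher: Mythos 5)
Your proposal is correct and follows essentially the same route as the paper: the same substitution $\lceil\frac{q-mc}{a}\rceil=\frac{q-mc}{a}+\delta_m$ (with the same cancellation of cross terms), the same reduction of all polynomial pieces to the closed forms of Remark~\ref{r1}, and the same appeal to Lemma~\ref{l8}(1) for the periodic $\delta_m$-sum. The only difference is organizational — the paper regroups $(md+1)(q-mc-1-\frac{amd}{2})$ into $(q-mc)$ and $(q-m(c+ad))$ pieces so as to merge part of the second sum with the third before evaluating, while you evaluate the three sums separately as power sums and combine at the end — and your $q^3$-coefficient check confirms the two bookkeepings agree.
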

\begin{proof}

Note that $$(md+1)
\left(q-mc-1-\frac{amd}{2}\right) = 
md(q-mc-\frac{amd}{2})+ (q-mc-\frac{amd}{2}) -md -1$$ 
$$= \frac{2}{a}(\frac{q-mc}{2} - \frac{q-mc-amd}{2})
(\frac{q-mc}{2} + \frac{q-mc-amd}{2}) + (\frac{q-mc}{2} + \frac{q-mc-amd}{2})
-md -1$$
$$ = \frac{(q-mc)^2}{2a}+\frac{q-mc}{2} - 1 + 
\frac{(q-mc-amd)}{2}-\frac{(q-mc-amd)^2}{2a}-md.$$
On the other hand, for 
$$\displaystyle{\delta_m}  =  \displaystyle{\lceil{(q-mc)}/{a}\rceil
 - {(q-mc)}/{a}},$$
$$\lceil\frac{q-mc}{a}\rceil \left(q-mc-1-\frac{a}{2}
\left(\lceil\frac{q-mc}{a}\rceil-1\right)\right) 
 = (\frac{q-mc}{a}+\delta_m)\left(\frac{q-mc}{2}-1-
\frac{a\delta_m}{2}+\frac{a}{2}\right)$$
$$= \frac{(q-mc)^2}{2a}-\frac{(q-mc)}{a} + \frac{q-mc}{2} +
 (\frac{a}{2}-1)\delta_m-\frac{a}{2}\delta_m^2.$$

Now combining all the terms
we get
$$\begin{array}{lcl} 
 A  & = & 
\displaystyle{\sum_{m = 0}^{\lceil q/d\rceil -1}\left\{ d(q-md-1)(q+m)(c+\frac{ad}{2}) + d(q-md-1) \right\}}\\

& & \displaystyle{+(d+1)(c+\frac{ad}{2})\left[ \sum_{m = 0}^{\lceil q/c\rceil -1}
\left\{\frac{q-mc}{2} - 1 + \frac{(q-mc)^2}{2a}\right\}\right.}\\

& & \displaystyle{ + \sum_{m = 0}^{\lceil {q/c+ad}\rceil -1}
\left\{ \frac{(q-amd-mc)}{2} - \frac{(q-amd-mc)^2}{2a} \right\}}\\
& & \displaystyle{\left.-
\sum_{m = \lceil {q/c+ad}\rceil}^{\lceil \frac{q}{c}\rceil -1}
\left\{\frac{(q-mc)}{a} + (\frac{a\delta_m^2}{2}+(1-\frac{a}{2})\delta_m-1)
  \right\} 
- \sum_{m = 0}^{\lceil {q/c+ad}\rceil -1}md

\right]}.  
\end{array}$$

By Remark~\ref{r1},
we can check that different terms in $A$ can be written down as follows:
  $$\sum_{m = 0}^{\lceil q/d\rceil -1} d(q-md-1)(q+m) = q^3\left(\frac{1}{2}+\frac{1}{6d}\right)
+q^2\left(\frac{d}{2}-1-\frac{1}{2d}\right)$$
$$+q\left((d+1)\left[\frac{-d\epsilon_2^2}{2}+\frac{d\epsilon_2}{2}
-\epsilon_2\right]+\frac{1}{2}-\frac{d}{6}\right) -
 q^0\left((d\epsilon_2)\left[\frac{d\epsilon_2^2}{3}-\frac{d\epsilon_2}{2}+\frac{d}{6}+
\frac{\epsilon_2}{2}-\frac{1}{2}\right]\right)$$
 
and
$$ II(a) := \displaystyle{ \sum_{m = 0}^{\lceil \frac{q}{c}\rceil -1}
\left\{\frac{q-mc}{2} - 1 + \frac{(q-mc)^2}{2a}\right\}
  + \sum_{m = 0}^{\lceil \frac{q}{(c+ad)}\rceil -1}
\left\{ \frac{(q-amd-mc)}{2} - \frac{(q-amd-mc)^2}{2a} \right\}}$$

$$ = q^3\left(\frac{d}{6c(ad+c)}\right) + q^2\left(\frac{1}{4c}+\frac{1}{4(ad+c)}\right) + 
q\left(\frac{1}{2}-\frac{d}{12}-\frac{1}{c}\right)$$
$$-q^0\left((ad+c)\frac{\epsilon_0}{2}\left[\frac{(\epsilon_0-1)}{2} + 
\frac{ad+c}{3a}(\epsilon_0^2-\frac{3}{2}\epsilon_0 +\frac{1}{2})\right] + 
\frac{c\epsilon_1}{2}\left[
\frac{(\epsilon_1-1)}{2} -
\frac{c}{3a}(\epsilon_1^2-\frac{3}{2}\epsilon_1 +\frac{1}{2})\right]+\epsilon_1\right).$$

Now consider 
$$II(b)  :=  \displaystyle{
\sum_{m = \lceil {q/c+ad}\rceil}^{\lceil {q/c}\rceil -1}
\left\{\frac{(q-mc)}{a} + (\frac{a\delta_m^2}{2}+(1-\frac{a}{2})\delta_m-1)\right\}
+\sum_{m = 0}^{\lceil {q/c+ad}\rceil -1}md}.$$

Hence, by Remark~\ref{r1} and part~$(1)$ of Lemma~\ref{l8}, and noting that 
$$ md-\frac{q-mc}{a} =  -(\frac{q-m(c+ad)}{a}),$$ 

$$ \begin{array}{lcl}
II(b) & = & \displaystyle{q^2\left(\frac{d}{2(ad+c)}\right) + 
 q\left(\frac{ad}{ad+c}[\frac{1}{a_1c}M_1
-\frac{1}{c}]  \right)}\\
 & & \displaystyle{ + q^0
\left(
(\frac{\epsilon_1 - \epsilon_0 + \Delta_1- \Delta_0}{a_1})M_1 
+ \frac{a}{2}(\sum_{i=0}^{\Delta_0}\delta^2_{i} -
\sum_{i=0}^{\Delta_1}\delta^2_{i})\right.}\\
&& \left. \displaystyle{+(1-\frac{a}{2})(\sum_{i=0}^{\Delta_0}\delta_{i} -
\sum_{i=0}^{\Delta_1}\delta_{i} )+
\epsilon_0-\epsilon_1-\frac{c\epsilon_1}{2a}(\epsilon_1-1)+
\frac{c\epsilon_0}{2a}(\epsilon_0-1) + \frac{d\epsilon_0}{2}(\epsilon_0-1)
}\right).\end{array}$$

Then $$\begin{array}{l}
\displaystyle{ (d+1)(c+\frac{ad}{2})~II = (d+1)(c+\frac{ad}{2})~(II(a)-II(b))}\\
 = \displaystyle{(d+1)(c+\frac{ad}{2})\left\{
q^3\left(\frac{d}{6c(ad+c)}\right) + q^2\left(\frac{1}{4c}+\frac{1}{4(ad+c)}-
\frac{d}{2(ad+c)} \right)\right.}\\
 + \displaystyle{q\left(\frac{1}{2}-\frac{d}{12}-\frac{1}{c}-
\frac{ad}{ad+c}[\frac{1}{a_1c}M_1
-\frac{1}{c}]   \right)}\\
-\displaystyle{q^0\left(\frac{(ad+c)\epsilon_0}{2}\left[\frac{(\epsilon_0-1)}{2} + 
\frac{ad+c}{3a}(\epsilon_0^2-\frac{3}{2}\epsilon_0 +\frac{1}{2})\right]+
\frac{c\epsilon_1}{2}\left[\frac{(\epsilon_1-1)}{2} 
- \frac{c}{3a}(\epsilon_1^2-\frac{3}{2}\epsilon_1 +\frac{1}{2})\right]\right.}\\
+\displaystyle{ 
(\frac{\epsilon_1 - \epsilon_0 + \Delta_1- \Delta_0}{a_1})M_1 
+ \frac{a}{2}(\sum_{i=0}^{\Delta_0}\delta^2_{i} -
\sum_{i=0}^{\Delta_1}\delta^2_{i})}\\
 \left. \displaystyle{+(1-\frac{a}{2})(\sum_{i=0}^{\Delta_0}\delta_{i} -
\sum_{i=0}^{\Delta_1}\delta_{i} )+
\epsilon_0
-\frac{c\epsilon_1}{2a}(\epsilon_1-1) + \frac{c\epsilon_0}{2a}(\epsilon_0-1)
+ \frac{d\epsilon_0}{2}(\epsilon_0-1)}
\right\}.\end{array}$$
Since $$ A = \displaystyle{\sum_{m = 0}^{\lceil q/d\rceil -1}\left\{ d(q-md-1)(q+m)(c+\frac{ad}{2}) + d(q-md-1) \right\}}
+ (d+1)(c+\frac{ad}{2})~II
,$$
 using the above computations we get expression for $A$.\end{proof}

\noindent{\underline{ Proof of the theorem}}: \\
 Recall 
$$HK(X, \sL)(q) =  \sum_{m=0}^{q-1}h^0(X, \sL^m) + \sum_{m\geq 0}|\coker~\Phi_m|,$$ where
we have computed $\sum_{m=0}^{q-1}h^0(X, \sL^m)$ in Equation~({\ref{h1}}).
We can also rewrite 
$$ \sum_{m=0}^{q-1}h^0(X, \sL^m) = q^3\left(\frac{d}{3}(c+\frac{ad}{2})\right)
+ q^2\left((-\frac{d}{2}+\frac{1}{2})(c+\frac{ad}{2})+\frac{d}{2}\right)
+q\left((\frac{d}{6}-\frac{1}{2})(c+\frac{ad}{2})-\frac{d}{2}+1\right).$$
If $c\geq d$ then  applying Lemma~\ref{l2} and Lemma~\ref{l3} to 
Corollary~\ref{l1}, we get 
$\sum_{m\geq 0}|\coker~\Phi_m| = A $. Now, by Lemma~\ref{l6} and the expression given above we get 
$HK(X, \sL)(q)$ as stated in the theorem. 

\vspace{5pt}

If  $c < d$, then  applying Lemma~\ref{l2}, Lemma~\ref{l3} and 
 and Lemma~\ref{l7} to Corollary~\ref{l1}, we get
$$\sum_{m\geq 0}|\coker~\Phi_m|  =  A + B,$$
where
$$\begin{array}{lcl}
B  & = & -
\displaystyle{\left(dc+(d+1)\frac{ad}{2}\right)\left\{\displaystyle{ 
\sum_{m=\lceil q/d\rceil }^{\lceil 
\frac{(a+1)q}{(c+ad)}\rceil-1} 
(md-q+1)\left(q-mc-1-\frac{a}{2}(md-q)\right)}\right.}\\
&  & +\left.\displaystyle{\sum_{m=\lceil{(a+1)q}/{(c+ad)}\rceil}^{\lceil q/c\rceil-1}
\lceil\frac{q-mc}{a}\rceil\left(q-mc-1-\frac{a}{2}
\left(\lceil \frac{q-mc}{a}\rceil -1\right)\right)} \right\}.\end{array}$$
 
Note that 
$$\begin{array}{lcl}
(md-q+1)\left(q-mc-1-\frac{a}{2}(md-q)\right) & = & 
\displaystyle{\frac{(q-mc)^2}{2a} -\frac{((a+1)q-(c+ad)m)^2}{2a}}\\
& & +\displaystyle{ 2q-mc-md-1-\frac{a}{2}(md-q)},
\end{array}$$
where 
$$\sum_{m = \lceil q/d \rceil}^{\lceil {(a+1)q/c+ad}\rceil-1}
\frac{((a+1)q-amd-mc)^2}{2a} = \sum_{m = 0}^{\lceil {(a+1)q/c+ad}\rceil -1}
\frac{((a+1)q-amd-mc)^2}{2a} $$
$$ - \sum_{m = 0}^{\lceil q/d\rceil -1}\left\{\frac{(q-mc)^2}{2a} + 
\frac{a}{2}(q-md)^2 + (q-md)(q-mc)\right\}.$$

Therefore 
$$\sum_{m=\lceil q/d\rceil }^{\lceil \frac{(a+1)q}{(c+ad)}\rceil-1} 
(md-q+1)\left(q-mc-1-\frac{a}{2}(md-q)\right) = -
\sum_{m = 0}^{\lceil {(a+1)q/c+ad}\rceil -1}\frac{((a+1)q-amd-mc)^2}{2a} 
$$
$$+ \sum_{m = 0}^{\lceil q/d\rceil -1}\left\{\frac{(q-mc)^2}{2a} + 
\frac{a}{2}(q-md)^2 + (q-md)(q-mc)\right\} $$
$$ +  \sum_{m=\lceil q/d\rceil }^{\lceil \frac{(a+1)q}{(c+ad)}\rceil-1}
 \left\{\frac{(q-mc)^2}{2a} 
+ 2q-mc-md-1-\frac{a}{2}(md-q)\right\}.$$
On the other hand
 
$$\sum_{m=\lceil{(a+1)q}/{(c+ad)}\rceil}^{\lceil q/c\rceil-1}
\lceil\frac{q-mc}{a}\rceil \left(q-mc-1-\frac{a}{2}
\left(\lceil \frac{q-mc}{a}\rceil -1\right)\right) = $$ 
$$\sum_{m=\lceil{(a+1)q}/{(c+ad)}\rceil}^{\lceil q/c\rceil-1} 
\frac{(q-mc)^2}{2a}-\frac{(q-mc)}{a} + \frac{q-mc}{2} +
 (\frac{a}{2}-1)\delta_m-\frac{a}{2}\delta_m^2.$$
Combining the terms we get

$$\begin{array}{lcl}
B  & = &  
\left( d(c+\frac{(d+1)a}{2})\right) 
 \left\{\displaystyle{ \sum_{m = 0}^{\lceil {(a+1)q/c+ad}\rceil -1}
\frac{((a+1)q-amd-mc)^2}{2a}}\right.\\

& & \displaystyle{- \sum_{m = 0}^{\lceil q/c\rceil -1}\frac{(q-mc)^2}{2a} -
\sum_{m = 0}^{\lceil q/d\rceil -1}\frac{a}{2}(q-md)^2
-\sum_{m = 0}^{\lceil q/d\rceil -1}(q-md)(q-mc)}\\
& & \displaystyle{-\sum_{m = \lceil {(a+1)q/ad+c}\rceil }^{\lceil 
q/c\rceil -1}\left\{(q-mc)\frac{a-2}{2a}+
(\frac{a}{2}-1)\delta_m-\frac{a}{2}\delta_m^2).\right\}}\\
& & \displaystyle{\left.
 -\sum_{m = \lceil q/d\rceil}^{\lceil 
{(a+1)q/ad+c}\rceil -1}(2q-mc-md-1-\frac{a}{2}(md-q))\right\}.}
\end{array}$$

Note that 
$$\sum_{m = \lceil {(a+1)q/ad+c}\rceil }^{\lceil 
q/c\rceil -1} \frac{(a-2)}{2a}(q-mc) + 
 \sum_{m = \lceil q/d\rceil}^{\lceil 
{(a+1)q/ad+c}\rceil -1}(2q-mc-md-1-\frac{a}{2}(md-q))$$
$$ = \sum_{m = 0}^{\lceil 
q/c\rceil -1}\frac{(a-2)}{2a}(q-mc) + 
\sum_{m = 0}^{\lceil {(a+1)q/c+ad}\rceil -1}
\frac{a+2}{2a}((a+1)q-amd-mc)$$
$$
-\sum_{m = 0}^{\lceil q/d\rceil -1}\left\{\frac{a+2}{2}(q-md)+ (q-mc)\right\} 
-\sum_{m = \lceil q/d\rceil}^{\lceil 
{(a+1)q/ad+c}\rceil -1}1.$$

Therefore 
$$\begin{array}{l}
 B  =
  \left( d(c+\frac{(d+1)a}{2})\right) 
 \left\{\displaystyle{ \sum_{m = 0}^{\lceil {(a+1)q/c+ad}\rceil -1}
\frac{((a+1)q-amd-mc)^2}{2a} - \frac{(a+2)}{2a}}((a+1)q-amd-mc)\right.\\
\displaystyle{- \sum_{m = 0}^{\lceil q/c\rceil -1}\left\{\frac{(q-mc)^2}{2a}
+\frac{(a-2)}{2a}(q-mc)\right\}
- \sum_{m = 0}^{\lceil q/d\rceil -1}\left\{\frac{a}{2}(q-md)^2
-\frac{(a+2)}{2}(q-md)\right\}}\\
-\sum_{m = 0}^{\lceil q/d\rceil -1}(q-md-1)(q-mc) -

 \displaystyle{ \left.\sum_{m = \lceil (a+1)q/(ad+c)\rceil}^{\lceil 
q/c\rceil -1}\left((\frac{a}{2}-1)\delta_m-\frac{a}{2}\delta_m^2\right)  
+ \sum_{m = \lceil q/d\rceil}^{\lceil 
(a+1)q/ad+c\rceil-1}1\right\}}.
\end{array} $$

Now, by Remark~\ref{r1},  we can write down the remaining terms of the expression  as follows:

$$\displaystyle{ \sum_{m = 0}^{\lceil \frac{(a+1)q}{c+ad}\rceil -1}
((a+1)q-amd-mc)^2} = \displaystyle{ \sum_{m = 0}^{\lceil
 \frac{(a+1)q}{c+ad}\rceil -1}
((a+1)q-m(c+ad))^2}$$

$$\displaystyle{q^3\left(\frac{(a+1)^3}{3(ad+c)}\right) + q^2\left(\frac{(a+1)^2}{2}\right) 
+ q\left(\frac{(a+1)(ad+c)}{6}\right) + q^0\left(\frac{(ad+c)^2{\bar{\epsilon_0}}}{3}({\bar{\epsilon_0}}^2-
\frac{3}{2}{\bar{\epsilon_0}} +\frac{1}{2})\right)  }$$
and
$$\displaystyle{\sum_{m = 0}^{\lceil q/d\rceil -1}(q-md-1)(q-mc) = 
\frac{1}{d^2}\left[q^3\left(\frac{d}{2}-\frac{c}{6}\right)
+ q^2\left(\frac{d^2}{2} -d +\frac{c}{2}\right)\right.}
\quad\quad\quad\quad\quad\quad\quad\quad\quad\quad\quad\quad $$
$$
 + q\left(\frac{d^3\epsilon_2}{2}(1-\epsilon_2)-
\frac{cd^2\epsilon_2}{2}(1-\epsilon_2)+
\frac{cd^2}{6}+\frac{dc(\epsilon_2-1)}{2}-d^2\epsilon_2 \right)$$
$$ + \left. q^0\left(\frac{cd^3\epsilon_2}{3}(\epsilon_2^2-
\frac{3}{2}\epsilon_2+\frac{1}{2})-\frac{cd^2\epsilon_2(\epsilon_2-1)}{2}
\right)\right].$$  

We see that  coefficient of
 $q^2$ in the term $B$  
$$ = d(c+\frac{(d+1)a}{2})\left[
\left(\frac{(a+1)^2}{4a}-\frac{(a+2)(a+1)^2}{4a(c
+ad)}\right)-\left(\frac{1}{4a}+\frac{a-2}{4ac}\right)-\left(\frac{a}{4}-
\frac{(a+2)}{4d}\right)\right.$$
$$\left.-\left(\frac{1}{2}-\frac{1}{d}+
\frac{c}{2d^2}\right)\right]  = -
d(c+\frac{(d+1)a}{2})\left[\frac{(a+2)(a+1)^2}{4a(c
+ad)} + \frac{a-2}{4ac} -
\frac{(a+2)}{4d} - \frac{1}{d}+
\frac{c}{2d^2}\right]   $$

Now putting together we get $HK(X, \sL)(q)$, for the case $c <  d$.
 
In particular the  
coefficient of $q$ will also involve $\epsilon_2$, $a_1$ 
and $\delta_0$. On the other hand 
coefficient of $q^0$ has terms involving $\epsilon_0$, ${\bar{\epsilon_0}}, \epsilon_2$,
$\epsilon_1$, $\delta_0, \cdots, 
\delta_{a_1-1}$, $\Delta_0$, $\Delta_2$.

\end{document}